\newtheorem{theorem}{Theorem}[section]
\newtheorem{proposition}{Proposition}[section]
\newtheorem{corollary}{Corollary}[section]
\theoremstyle{definition}
\newtheorem{definition}{Definition}[section]
\newtheorem{lemma}{Lemma}[section]
\newtheorem{example}{Example}[section]
\theoremstyle{remark}
\newtheorem{remark}{Remark}
\theoremstyle{claim}
\newtheorem{claim}{Claim}[section]
\theoremstyle{theoremA}
\theoremstyle{theoremB}
\newcommand{\real}{\mathbb{R}}
\newcommand{\Sp}{\mathbf{S}}
\newcommand{\si}{\sigma}
\newcommand{\la}{\lambda}
\newcommand{\al}{\alpha}
\newcommand{\om}{\omega}
\newcommand{\Om}{\Omega}
\newcommand{\na}{\nabla}
\newcommand{\ep}{\epsilon}
\newcommand{\ga}{\gamma}
\newcommand{\be}{\beta}
\newcommand{\de}{\delta}
\newcommand{\ka}{\kappa}
\newcommand{\ti}{\tilde}
\newcommand{\vp}{\varphi}
\newcommand{\vol}{\text{\rm vol}}
\newcommand{\lan}{\left\langle}
\newcommand{\ran}{\right\rangle}
\newcommand{\p}{\partial}
\newcommand{\dv}{\mathrm{div}}
\newcommand{\hs}{\mathrm{Hess}}
\newcommand{\tr}{\mathrm{Tr\,}}
\newcommand{\rad}{\mathrm{rad}}
\newcommand{\m}{\mathcal}
\newcommand{\spp}{\mathrm{supp}\,}
\newcommand{\inj}{\mathrm{Inj}}
\newcommand{\La}{\Lambda}
\title[Sobolev inequality for weighted submanifolds]{Sobolev and isoperimetric inequalities for submanifolds in weighted ambient spaces}
\author{M. Batista}
\address{Instituto de Matem\'atica, Universidade Fe\-deral de Alagoas, Macei\'o, AL, CEP 57072-970, Brazil}\email{mhbs28@impa.br}
\author{H. Mirandola}
\address{Instituto de Matem\'atica, Universidade Fe\-deral do Rio de Janeiro, Rio de Janeiro, RJ, CEP 21945-970, Brasil} \email{mirandola@ufrj.br}
\subjclass[2000]{Primary 53C42; Secondary 53B25}
\begin{document}
\maketitle
\begin{abstract} In this paper, we prove Sobolev and isoperimetric inequalities for submanifold in weighted manifold. Our results generalize the Hoffman-Spruck's inequalities \cite{hs}.
\end{abstract}

\section{Introduction}

A lot of topics in the geometric analysis, such as, Ricci flow, mean curvature flow, anisotropic mean curvature and optimal transportation theory, are related to submanifolds in weighted manifolds, see for instance \cite{e}, \cite{cmz}, \cite{mw1}, \cite{mw2}, \cite{ww}, \cite{m} and references therein. We recall that a weighted manifold $(\bar{M},g,d\bar\mu)$ is a Riemannian manifold $(\bar M,g)$ endowed with a weighted volume form  $ d\bar\mu=e^{-f}d\bar M$, where $d\bar M$ is the volume element induced by the metric $g$ and $f$ is a real-valued smooth function on $\bar{M}$, sometimes called the density of $\bar M$. In this paper, following the  papers of Hoffman and Spruck \cite{hs} and Michael and Simon \cite{ms}, we will study  Sobolev and isoperimetric inequalities to immersed submanifolds in weighted ambient spaces. The value of such inequalities is well known in the theory of the partial differential equations.

 Let $x:M\to \bar M$ be an isometric immersion of a complete manifold with (possibly nonempty) boundary $\p M$ in the weighted manifold $(\bar M,g,d\bar\mu)$. Following Gromov \cite{g}, some authors have introduced the extrinsic object associate to the immersion $x$, called by weighted mean curvature vector field $H_f$, given by $$H_f=H+\bar{\nabla}f^\perp,$$ where $H$ is the mean curvature vector of the submanifold and ${}^\perp$ denote the orthogonal projection onto the normal bundle $TM^\perp$. In this context, it is natural to consider the first and second variations for the weighted area functional,
$$\vol_f(\Omega)=\int_\Omega d\mu,$$ where $d\mu= e^{-f(x)}dM$ and $\Om$ is a bounded domain. In 2003, Bayle \cite{b}, obtain the first variational formulae
$$\dfrac{d}{dt}\bigm|_{t=0}\vol_f(\Omega_t)=\int_\Omega \langle H_f, V\rangle d\mu,$$ where $V$ is variational field. Thus the $f$-mean curvature vector appears naturally from a variational context.

\begin{example}\label{self-shrinker example} Consider the weighted Euclidean space $(\real^n, d\bar\mu=e^{-|x|^2/4}dx)$, where $|\!\cdot\!|$ denotes the Euclidean norm and $dx$ the Euclidean volume element. We recall that an isometric immersion $F:M\to \real^n$ is be a self-shrinker if its mean curvature vector satisfies $2H=-F^\perp$. It is simple to show this definition is equivalent to say that $F$ is $({|x|^2}/{4})$-minimal. 
\end{example}

To state our main theorem, we need some definitions and notations. Let $\m K:\real\to [0,\infty)$ be a non-negative even continuous and $h$ the solution of the following Cauchy Problem:
\begin{equation}\label{cauchy} \left\{
\begin{array}{l}
h''+ \m K h = 0\\
h(0)=0, h'(0)=1.
\end{array}\right.
\end{equation}
Let $r_0=r_0(\m K)>0$ and $s_0=s_0(\m K)>0$ be defined as follows: $(0,r_0)$ is an interval where $h$ is increasing and $(0,s_0)=h(0,r_0)$. 
Assume that the radial curvatures of $\bar M$ with base point
$\xi$ satisfy 
\begin{equation}\label{rad-cond}
(\bar K_{\rad})_\xi\leq \m K(r_\xi),
\end{equation}
for all $\xi\in M$, where $r_\xi=d_{\bar M}(\cdot\,,\xi)$ is the distance in $\bar M$ from $\xi$. Our main theorem says the following. 

\begin{theorem}\label{desig-Sobolev} Under the notations above, we assume that $\bar M$ satisfies (\ref{rad-cond})  and that $f^*=\sup_M f<+\infty$. Let $\vp$ be a compactly supported nonnegative $C^1_0$ function on $M$ that vanishes along the boundary $\p M$. Then there exists a positive constant $S$, depending only $m$ and $\m K$ 
such that the following inequality holds:
\begin{equation*}
\left(\int_M \vp^{\frac{mp}{m-p}}d\mu\right)^{\frac{m-p}{m}}
 \leq S\, e^{\frac{f^*}{m}} \int_{M} \left(|\na \vp| + \vp|H_f-\bar\na f|\right)^p d\mu,
\end{equation*}
for all $1\le p<m$, provided that there exists $\ka\in (0,1)$ satisfying: 
\begin{equation}\label{cond-sobolev-true}\left\{
\begin{array}{l}
\bar J:= \left(\dfrac{\om_m^{-1}e^{f^*}}{1-\ka}\vol_f\big(\spp(\vp)\big)\right)^{\frac{1}{m}}\le s_0; \\ \\
h^{-1}(\bar J)\le 2\, \inj_{\vp},
\end{array}\right.
\end{equation}
where $\om_m$ is the volume of the unit ball in $\real^m$ and $\inj_\vp$ is 
 the minimum of the injectivity radius of $\bar M$ restricted to the points of 
 $\spp\vp$. Furthermore, the constant $S$ is given by
\begin{equation}\label{sobolev-constant}
S=\frac{2^m m}{\ka(m-1)}\frac{r_0}{s_0}\left(\frac{\om_m^{-1}}{1-\ka}\right)^\frac{1}{m}. \end{equation}
\end{theorem}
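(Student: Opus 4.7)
The strategy is the Michael--Simon/Hoffman--Spruck scheme adapted to the weighted setting: first prove the endpoint case $p=1$ via a monotonicity-type estimate on extrinsic balls, then pass to the range $1\le p<m$ by the standard H\"older bootstrap.

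For $p=1$, fix $\xi\in\spp\vp$, set $r=r_\xi$, and put $\psi(\rho)=\int_{M\cap \bar B_\rho(\xi)}\vp\,d\mu$. Apply the weighted divergence theorem on $M\cap \bar B_\rho(\xi)$ to the tangential part of $\vp\,\bar\na r$: expanding
$$\dv_M\bigl(\vp(\bar\na r)^T\bigr)-\vp\langle(\bar\na r)^T,(\bar\na f)^T\rangle$$
via the Gauss formula produces the tangential trace $\vp\sum_i\bar\na^2 r(e_i,e_i)$, the mean-curvature term $\vp\langle H,\bar\na r\rangle$, the gradient term $\langle\na\vp,\bar\na r\rangle$, and the weight contribution $-\vp\langle\bar\na r,(\bar\na f)^T\rangle$. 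The identity $H_f-\bar\na f=H-(\bar\na f)^T$ collapses the mean-curvature and weight terms into $\vp\langle\bar\na r,H_f-\bar\na f\rangle$, which explains the form of the right-hand side in the statement. The radial Hessian comparison theorem, applied to (\ref{rad-cond}), bounds $\bar\na^2 r$ from below by $(h'(r)/h(r))(g-dr\otimes dr)$ on vectors orthogonal to $\bar\na r$, valid as long as $\rho<\min\{\inj_\xi,r_0\}$; inserting this yields a first-order differential inequality for $\psi$ that rearranges into a monotonicity statement for $\rho\mapsto\psi(\rho)/h(\rho)^m$, whose limit as $\rho\downarrow 0$ equals $\om_m e^{-f(\xi)}\vp(\xi)$.

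The hypotheses (\ref{cond-sobolev-true}) are exactly those needed to exploit this monotonicity up to a suitable threshold radius: either the continuous function $\rho\mapsto\psi(\rho)/(\om_m h(\rho)^m)$ first attains the level $\ka\, e^{-f^*}$ for some $\rho_\xi\le h^{-1}(\bar J)\le 2\,\inj_\vp$, or it remains below that level on the whole interval, in which case the defining equality of $\bar J$ contradicts the global bound $\psi\le\vol_f(\spp\vp)$. In either alternative one derives a pointwise inequality of schematic form
$$\bigl(\om_m(1-\ka)\bigr)^{1/m}e^{-f^*/m}\,\vp(\xi)\,\rho_\xi\le C\int_{M\cap B_{\rho_\xi}(\xi)}\bigl(|\na\vp|+\vp|H_f-\bar\na f|\bigr)\,d\mu,$$
and the bound $\rho_\xi\le (r_0/s_0)h(\rho_\xi)$, valid on the increasing range of $h$, converts the factor $\rho_\xi$ into the geometric constant $r_0/s_0$ appearing in (\ref{sobolev-constant}). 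Applying this pointwise bound to the superlevel sets $\{\vp>t\}$ and using the coarea formula produces the $p=1$ Sobolev inequality with the explicit constant (\ref{sobolev-constant}).

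For the passage $p=1\to 1<p<m$, apply the $p=1$ inequality to $\vp^q$ with $q=p(m-1)/(m-p)$, expand $\na\vp^q=q\vp^{q-1}\na\vp$, and apply H\"older with conjugate exponents $(p,p/(p-1))$. The algebraic identity $(q-1)p/(p-1)=qm/(m-1)=mp/(m-p)$ makes one factor of $\bigl(\int\vp^{mp/(m-p)}d\mu\bigr)^{(p-1)/p}$ appear on the right-hand side, which is absorbed into the left to give the stated inequality. The principal obstacle is the first step: one must combine the weighted divergence identity with the Hessian comparison so that the error terms assemble precisely into $|\na\vp|+\vp|H_f-\bar\na f|$, while simultaneously the constant sharpens to (\ref{sobolev-constant}) and the geometric conditions (\ref{cond-sobolev-true}) emerge naturally from the threshold-radius construction used to extract the pointwise estimate from the monotonicity.
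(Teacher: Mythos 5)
Your monotonicity set-up, the weighted divergence identity collapsing $H-(\bar\na f)^T$ into $H_f-\bar\na f$, the Hessian comparison, the passage via superlevel sets and the coarea formula, and the H\"older bootstrap from $p=1$ to $1<p<m$ all match the paper's overall Michael--Simon/Hoffman--Spruck scheme. However, there is a genuine gap at the heart of the argument: the step from the ball-wise (``threshold radius'') estimate at each $\xi$ to the global statement. You write a pointwise inequality of the form $c\,\vp(\xi)\,\rho_\xi\le C\int_{M\cap B_{\rho_\xi}(\xi)}(|\na\vp|+\vp|H_f-\bar\na f|)\,d\mu$ and then claim that ``applying this pointwise bound to the superlevel sets $\{\vp>t\}$ and using the coarea formula produces the $p=1$ Sobolev inequality.'' This does not follow: the balls $B_{\rho_\xi}(\xi)$ overlap, with radii $\rho_\xi$ that vary with $\xi$, and one cannot simply sum or integrate such local estimates. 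The paper fills this hole with a Vitali-type covering lemma. It dyadically stratifies the set $A=\{\vp\ge 1\}$ into pieces $A_j$ according to the scale at which the local estimate $\bar\phi_\xi(tR)\le\tfrac{2\al}{\ka}t^{m-1}\bar\psi_\xi(R)$ holds, then inductively extracts finite families $F_k$ whose balls $\m B_{R_k}(\xi)$ are pairwise disjoint yet whose enlargements $S_{t\be R_k}(\xi)$ still cover $A$; only then does summing the ball-wise estimates yield $\vol_f(A)\le\frac{2\al}{\ka}t^{m-1}\int_M|\na\vp+\vp(H_f-\bar\na f)|\,d\mu$, which is the inequality that is actually applied to each superlevel set. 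Without some covering/Besicovitch device of this kind, the pointwise estimates do not assemble into a volume bound on $\{\vp\ge s\}$, and the proof stalls.

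A secondary issue is the dichotomy you describe for extracting the threshold radius. You propose that either $\rho\mapsto\psi(\rho)/(\om_m h(\rho)^m)$ first reaches the level $\ka e^{-f^*}$ at some $\rho_\xi$, or else it stays below that level and the defining equality for $\bar J$ contradicts $\psi\le\vol_f(\spp\vp)$. But since $\vp(\xi)\ge 1$ and $f(\xi)\le f^*$, the limit as $\rho\downarrow 0$ of this ratio is $e^{-f(\xi)}\vp(\xi)\ge e^{-f^*}>\ka e^{-f^*}$, so the second alternative is vacuous as stated, while the first is not quite the form in which the estimate is actually used. In the paper this delicacy is handled by Lemma 2.2 (the analogue of Michael--Simon's selection lemma): one integrates the monotonicity inequality over an interval, plays the global upper bound $h(\al)^{-m}\bar\phi_\xi(\al)\le(1-\ka)\om_m e^{-f^*}$ against the lower bound $\limsup_{\si\to 0}h(\si)^{-m}\bar\phi_\xi(\si)\ge\om_m e^{-f^*}$, and derives a contradiction from the averaged quantities --- not from a single threshold crossing. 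Your formulation would need to be recast along those lines, and in any event the covering argument is the piece that cannot be omitted.
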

\begin{remark}
It is simple to see that if $\bar M$ is a Hadamard manifold then $\bar R_\vp=+\infty$ and we can take $\m K=0$, hence any solution $h$ of (\ref{cauchy}) is given by $h(t)=t$ defined on any positive interval $(0,r_0)$. Thus condition (\ref{cond-sobolev-true}) is always satisfied and ${r_0}/{s_0}=1$. In this case, we can choose $S=S_0$ by 
\begin{equation}\label{optimal}
S_0 = \min_{k\in (0,1)} S=\frac{2^m (m+1)^{\frac{m+1}{m}}}{m-1}\om_m^{\frac{-1}{m}}.
\end{equation}  
If $\bar M$ is the sphere $\Sp^n(1/b)\subset \real^{n+1}$ of radius $1/b>0$ then we can take $\m K=b^2$. In this case, $h(t)=b^{-1}\sin(tb)$ defined on the interval $(0,\pi/(2b))$. Hence $r_0/s_0=\pi/2$. Thus we see that  Theorem \ref{desig-Sobolev} improve Hoffman-Spruck's inequality \cite{hs} even when $f\equiv 0$. The question on the optimal constant $S$ in  Theorem \ref{desig-Sobolev} remains open, even for $f\equiv 0$ and $M$ being a minimal surfaces in $\real^3$. To more details about this problem see \cite{c,ch}.
\end{remark}

A consequence of Theorem \ref{desig-Sobolev} is the following isoperimetric inequality.

\begin{theorem}\label{des_isoperimetrica} Under the notations above we assume that $\bar M$ satisfies (\ref{rad-cond}) and that $M$ is compact with possibly nonempty boundary.  Then it holds
\begin{equation}\label{eq-des-isoperimetrica}
\vol_f(M)^{\frac{m-1}{m}} \leq  S e^{\frac{f^*}{m}}\left(\vol_f(\partial M) + \int_M |H_f-\bar\na f|d\mu\right),
\end{equation}
provided that there exists $\ka\in (0,1)$ satisfying: 
\begin{equation}\label{cond-isoper-inq}\left\{
\begin{array}{l}
\bar J= \left(\dfrac{\om_m^{-1}e^{f*}}{1-\ka}\vol_f(M)\right)^{\frac{1}{m}}\le s_0; \\ \\
h^{-1}(\bar J)\le 2 \inj_{M},
\end{array}\right.
\end{equation}
where $f^*=\sup_M f$, $\inj_{ M}$ is the minimum of the injectivity radius of $\bar M$ restricted to the points of $M$, and $S$ is the constant as given in (\ref{sobolev-constant}).
\end{theorem}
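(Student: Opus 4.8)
The plan is to deduce (\ref{eq-des-isoperimetrica}) from Theorem \ref{desig-Sobolev} with $p=1$ by testing the Sobolev inequality against smooth approximations of the constant function $1$ on $M$. Note first that compactness of $M$ forces $f^*=\sup_M f<+\infty$, so Theorem \ref{desig-Sobolev} applies. If $\p M=\emptyset$ I would simply take $\vp\equiv 1$, a nonnegative $C^1_0$ function vanishing (vacuously) on $\p M$, and the $p=1$ inequality reads immediately as (\ref{eq-des-isoperimetrica}) with $\vol_f(\p M)=0$. Assume then $\p M\neq\emptyset$, and let $\rho(x)=d_M(x,\p M)$; by the collar theorem and compactness, $\rho$ is smooth with $|\na\rho|\equiv 1$ on a one-sided neighbourhood $\{0<\rho<\ep_0\}$ of $\p M$. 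For $0<\ep<\ep_0$ I would pick a smooth nondecreasing $\psi_\ep:[0,\infty)\to[0,1]$ with $\psi_\ep\equiv 0$ near $0$ and $\psi_\ep\equiv 1$ on $[\ep,\infty)$, and set $\vp_\ep=\psi_\ep\circ\rho$; this is a nonnegative $C^1_0$ function on $M$ that vanishes near $\p M$ and has $\spp\vp_\ep\subseteq M$. Since then $\vol_f(\spp\vp_\ep)\le\vol_f(M)$ and $\inj_{\vp_\ep}\ge\inj_M$, and since $h^{-1}$ is increasing on $(0,s_0)$, hypothesis (\ref{cond-isoper-inq}) implies that (\ref{cond-sobolev-true}) holds for every $\vp_\ep$ with the same $\ka$, hence with the same constant $S$ from (\ref{sobolev-constant}).

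Applying Theorem \ref{desig-Sobolev} with $p=1$ to $\vp_\ep$ gives
\begin{equation*}
\left(\int_M\vp_\ep^{\frac{m}{m-1}}\,d\mu\right)^{\frac{m-1}{m}}\le S\,e^{\frac{f^*}{m}}\int_M\Big(|\na\vp_\ep|+\vp_\ep\,|H_f-\bar\na f|\Big)\,d\mu,
\end{equation*}
and the next step is to let $\ep\to 0$. Since $0\le\vp_\ep\le 1$ and $\vp_\ep\to 1$ pointwise on $M\setminus\p M$, which has full $d\mu$-measure, dominated convergence gives $\int_M\vp_\ep^{m/(m-1)}\,d\mu\to\vol_f(M)$ and $\int_M\vp_\ep\,|H_f-\bar\na f|\,d\mu\to\int_M|H_f-\bar\na f|\,d\mu$. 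For the gradient term, $\na\vp_\ep=\psi_\ep'(\rho)\,\na\rho$ is supported in the collar, where $|\na\vp_\ep|=\psi_\ep'(\rho)$, so the coarea formula applied to $\rho$ yields
\begin{equation*}
\int_M|\na\vp_\ep|\,d\mu=\int_0^\ep\psi_\ep'(t)\,A_f(t)\,dt,\qquad A_f(t):=\int_{\{\rho=t\}}e^{-f}\,d\m H^{m-1};
\end{equation*}
as $A_f(t)\to\vol_f(\p M)$ when $t\to 0^+$ and $\int_0^\ep\psi_\ep'(t)\,dt=\psi_\ep(\ep)-\psi_\ep(0)=1$, the right-hand side tends to $\vol_f(\p M)$. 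Passing to the limit in the displayed inequality produces exactly (\ref{eq-des-isoperimetrica}).

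The main obstacle I anticipate is precisely this last limit, i.e.\ the structure of $M$ near $\p M$: one needs that the distance to the boundary is smooth with unit gradient on a one-sided tubular neighbourhood (so that $\{\rho<\ep\}\cong\p M\times[0,\ep)$ for small $\ep$) and that the $f$-weighted $(m-1)$-areas $A_f(t)$ of the parallel hypersurfaces vary continuously up to $t=0$; both follow from compactness of $M$ and the collar theorem together with the coarea formula, but they are the only genuinely geometric input beyond Theorem \ref{desig-Sobolev}. The rest is dominated convergence plus the purely formal remark that shrinking the support of the test function only decreases $\bar J$ and enlarges $\inj_\vp$, so that the hypotheses of Theorem \ref{desig-Sobolev} survive the approximation; I expect that part to be routine.
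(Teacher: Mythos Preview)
Your proposal is correct and follows essentially the same route as the paper: approximate the constant function $1$ by cutoffs $\vp_\ep$ vanishing on $\p M$, apply Theorem \ref{desig-Sobolev} with $p=1$, and pass to the limit using the coarea formula on the collar to recover $\vol_f(\p M)$ from the gradient term. Your version is in fact slightly more careful than the paper's (you explicitly verify that condition (\ref{cond-sobolev-true}) is inherited from (\ref{cond-isoper-inq}), treat the case $\p M=\emptyset$ separately, and avoid the auxiliary constant $A>1$ by computing $\int_M|\na\vp_\ep|\,d\mu=\int_0^\ep\psi_\ep'(t)A_f(t)\,dt$ directly), but the substance is identical.
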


By Theorem \ref{des_isoperimetrica}, it is simple to show that if $M^m$ is a closed self-shrinkers contained in a Euclidean ball $B\subset \real^n$ of radius $R$ then it holds that $e^{R^2/4}R\ge 2/S_0$ and $\vol_{(|x|^2/4)}(M)^{1/m}\ge {2e^{-R^2/4}}/{(S_0R)}$, where $S_0$ is the positive constant as in (\ref{optimal}).
Since the round spheres $S^m(\sqrt{2m})\subset \real^{m+1}$ of radius $\sqrt{2m}$ are examples of  $(|x|^2/4)$-minimal hypersurfaces, the term $``|H_f-\bar\na f|"$ that appears in Theorems \ref{desig-Sobolev} and \ref{des_isoperimetrica} cannot be replaced by $``|H_f|$". We can also see that the hypothesis $``f^*<\infty"$ is essential in Theorems \ref{desig-Sobolev} and \ref{des_isoperimetrica}. Consider a weighted Euclidean space $(\real^3,e^{-f}dx)$. If we take the function $f(x)=|x|^2/2$ then the plane $P=\real^2\subset \real^3$ has finite $f$-volume, $H_f=0$ and $\bar\na f=x$, hence $|H_f-\bar\na f|$ has finite $L^2_\mu$-norm. However, if $f\in C^1(\real^3)$ satisfies $f^*<\infty$ and $\sup_P|\na f|<\infty$ then, by Theorem \ref{des_isoperimetrica} and coarea formula, we can show that  that $P$ has infinite $f$-volume. More generally, we have the following

\begin{theorem}\label{cmv-proc} Let $\bar M$ be a complete weighted manifold $(\bar M, d\mu=e^{-f}d\bar M)$ with injectivity radius bounded from below by a positive constant and radial sectional curvatures satisfying (\ref{rad-cond}), for some even function $0\le \m K\in C^0(\real)$. Let $M^m$ be a complete noncompact manifold isometrically immersed in $\bar M$. Assume that $f^*<\infty$ and $|H_f-\bar\na f|\in L^p_\mu(M)$, for some $m\le p\le \infty$. Then each end of $M$ has infinite $f$-volume.
\end{theorem}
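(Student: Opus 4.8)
Here is the plan.

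The strategy is to argue by contradiction: assume some end $E$ of $M$ has $\vol_f(E)<+\infty$, and extract from a \emph{localized} version of Theorem \ref{desig-Sobolev} a differential inequality forcing the $f$-volume of a family of sublevel sets of a distance function to grow at least linearly, which is incompatible with $\vol_f(E)<+\infty$. First one disposes of the trivial case $m=1$ (a complete noncompact curve has infinite length, and $f^*<+\infty$ gives $\vol_f\ge e^{-f^*}\vol$), so assume $m\ge2$. Fix a smooth compact exhaustion $\{K_i\}$ of $M$ and let $E_i$ be the component of $M\setminus K_i$ carrying $E$; then $\vol_f(E_i)<+\infty$ for $i$ large, $\vol_f(E_i)\downarrow\vol_f(E)$, and $\bigcap_iE_i=\emptyset$. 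Since $\vol_f(E)<+\infty$ and $|H_f-\bar\na f|\in L^p_\mu$ with $p\ge m$, Hölder's inequality gives $|H_f-\bar\na f|\in L^m_\mu$ near $E$, so $\ep_i:=\int_{E_i}|H_f-\bar\na f|^m\,d\mu\to0$. Fix $\ka\in(0,1)$, let $S=S(m,\m K,\ka)$ be the constant of Theorem \ref{desig-Sobolev}, put $C_0:=2S\,e^{f^*/m}$, and let $v_0>0$ be a constant depending only on $m,\m K,\ka,f^*$ and the positive lower bound $i_0$ for $\inj_{\bar M}$, so small that $\vol_f(\spp\vp)\le v_0$ forces both inequalities in (\ref{cond-sobolev-true}) (recall $\inj_\vp\ge i_0$). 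Finally choose $i_1$ with $S\,e^{f^*/m}\ep_{i_1}^{1/m}\le\tfrac12$ and write $E'=E_{i_1}$, $L'=\vol_f(E')<+\infty$.

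The crucial step is to single out a good cross-section of $E'$. Let $\rho=d_{E'}(\cdot,\p E')$ and $\ga(t)=\vol_f(\{x\in E':\rho(x)<t\})$; then $\ga$ is locally absolutely continuous and nondecreasing, $\ga(+\infty)=L'$, and $\ga'(t)=\vol_f(\{\rho=t\})$ (weighted $(m{-}1)$-volume) for a.e.\ $t$ by the coarea formula. Put $G(t)=L'-\ga(t)=\int_t^\infty\ga'$, which is positive for every $t$ because $\{\rho>t\}$ is a nonempty open subset of $E'$. If one had $\ga'(t)^{m/(m-1)}\ge c\,G(t)$ for a.e.\ $t\ge T_0$, then $-G'(t)\ge c^{(m-1)/m}G(t)^{(m-1)/m}$, hence $\tfrac{d}{dt}G(t)^{1/m}\le-\tfrac1m c^{(m-1)/m}$ and $G$ would vanish in finite time, which is impossible. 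Therefore, with $c:=\tfrac12(2C_0)^{-m/(m-1)}$, there is an arbitrarily large $t_0$ with $\ga'(t_0)^{m/(m-1)}<c\,G(t_0)$; we choose $t_0$ so that in addition $G(t_0)<v_0$ (true for $t_0$ large), $t_0$ is a Lebesgue point of $\ga'$, and, after a $C^\infty$-small perturbation, $\Si_0:=\{\rho=t_0\}$ is a smooth compact hypersurface. Set $E^{(0)}:=\{\rho>t_0\}$ (a complete manifold with compact boundary $\Si_0$, possibly disconnected, with every component meeting $\Si_0$), $\si_0:=\vol_f(\Si_0)=\ga'(t_0)$ and $L_0:=\vol_f(E^{(0)})=G(t_0)$. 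By construction $L_0<v_0$ and $(2C_0\si_0)^{m/(m-1)}<L_0$, while $E^{(0)}\subseteq E_{i_1}$ gives $\int_{E^{(0)}}|H_f-\bar\na f|^m\,d\mu\le\ep_{i_1}$.

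Next one obtains a clean local Sobolev inequality on $E^{(0)}$. Theorem \ref{desig-Sobolev} with $p=1$ applies on each component of $E^{(0)}$ (its hypotheses hold: $\bar M$ satisfies (\ref{rad-cond}), $\sup_{E^{(0)}}f\le f^*<+\infty$, and for any nonnegative Lipschitz $\vp$ with compact support in $E^{(0)}$ vanishing on $\Si_0$ one has $\vol_f(\spp\vp)\le\vol_f(E^{(0)})=L_0<v_0$, so (\ref{cond-sobolev-true}) holds); estimating the term $\int\vp\,|H_f-\bar\na f|\,d\mu\le(\int\vp^{m/(m-1)}d\mu)^{(m-1)/m}\ep_{i_1}^{1/m}$ by Hölder, absorbing it, and summing over components (using $\|\cdot\|_{\ell^{m/(m-1)}}\le\|\cdot\|_{\ell^1}$, and noting Lipschitz test functions are admissible by density), one gets
\begin{equation*}
\Big(\int_{E^{(0)}}\vp^{\frac{m}{m-1}}\,d\mu\Big)^{\frac{m-1}{m}}\ \le\ C_0\int_{E^{(0)}}|\na\vp|\,d\mu .
\end{equation*}
Now put $u=d_{E^{(0)}}(\cdot,\Si_0)$ (so $|\na u|\le1$ a.e., $u=0$ on $\Si_0$, $\{u<s\}\uparrow E^{(0)}$) and $\be(s)=\vol_f(\{u<s\})$: absolutely continuous, nondecreasing, with $\be(0)=0$, $\be(+\infty)=L_0$ and $\be'(s)=\vol_f(\{u=s\})$ a.e. Testing with the Lipschitz functions that equal $1$ on $\{\ep\le u\le s\}$, rise linearly from $0$ on the collar $\{0<u<\ep\}$, and fall linearly to $0$ on $\{s<u<s+\de\}$, then letting $\de\to0$ (at a.e.\ $s$) and $\ep\to0$ (so that $\ep^{-1}\be(\ep)\to\si_0$, the collar of the smooth hypersurface $\Si_0$ having $f$-volume $\ep\,\si_0+O(\ep^2)$), yields
\begin{equation*}
\be(s)^{\frac{m-1}{m}}\ \le\ C_0\big(\si_0+\be'(s)\big)\qquad\text{for a.e.\ }s>0 .
\end{equation*}

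Finally one closes the argument. Wherever $\be(s)\ge\te:=(2C_0\si_0)^{m/(m-1)}$ the last inequality gives $\be'(s)\ge\tfrac1{2C_0}\be(s)^{(m-1)/m}$, i.e.\ $\tfrac{d}{ds}\be(s)^{1/m}\ge\tfrac1{2mC_0}$. Since $\te<L_0=\be(+\infty)$ and $\be$ is continuous with $\be(0)=0$, it attains the value $\te$ at some finite $s_\te$, and integrating from $s_\te$ gives $\be(s)^{1/m}\ge\te^{1/m}+\tfrac1{2mC_0}(s-s_\te)$ for $s\ge s_\te$, so $\be(s)\to+\infty$, contradicting $\be\le L_0<+\infty$. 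Hence every end of $M$ has infinite $f$-volume. The step I expect to be the main obstacle is the cross-section construction of the second paragraph: Theorem \ref{desig-Sobolev} is only a \emph{local} Sobolev inequality — condition (\ref{cond-sobolev-true}) caps $\vol_f(\spp\vp)$ — and, since admissible test functions must vanish on the boundary, it produces the boundary term $\si_0$; the finiteness of $\vol_f(E)$, via the coarea formula, is exactly what lets one simultaneously keep $L_0$ below the threshold $v_0$ and keep $\si_0$ small relative to $L_0^{(m-1)/m}$. The measure-theoretic regularity of level sets of distance functions used above is a routine technical point.
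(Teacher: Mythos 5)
Your argument is correct, but it takes a genuinely different route from the paper's. The paper picks a discrete sequence of points $q_k$ lying in pairwise disjoint annuli deep in the end, applies the isoperimetric inequality (Theorem~\ref{des_isoperimetrica}) directly to the geodesic balls $B_r(q_k)$ with $0<r<\la_1$, absorbs the weighted-mean-curvature term by a H\"older estimate exactly as you do, and integrates the resulting differential inequality in $r$ to get a uniform positive lower bound for $\vol_f(B_{\la_1}(q_k))$; packing infinitely many such disjoint balls into the end then forces $\vol_f(E)=\infty$. You instead run a coarea/pigeonhole argument to select a single compact cross-section $\Si_0$ deep in the end whose weighted $(m-1)$-volume $\si_0$ is small relative to the residual volume $L_0$, apply Theorem~\ref{desig-Sobolev} with $p=1$ to Lipschitz cutoffs built from the distance to $\Si_0$, and derive $\be(s)^{(m-1)/m}\le C_0\big(\si_0+\be'(s)\big)$ for the sublevel-set volume function $\be$, which, once $\be\ge(2C_0\si_0)^{m/(m-1)}<L_0$, blows up in finite distance, a contradiction. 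Both strategies rest on the same two ingredients --- smallness of $\vol_f$ far out so the Sobolev/isoperimetric hypotheses hold, and smallness of the $L^m_\mu$-norm of $H_f-\bar\na f$ out there to absorb the curvature term --- but the paper's ball-packing is technically lighter: geodesic balls of a fixed radius are automatically admissible competitors once one is sufficiently far out, whereas your cross-section step requires the extra pigeonhole selection of $t_0$ and the (routine but nontrivial) regularity/smoothing of the level set $\Si_0$. One typo worth flagging: since $\bigcap_i E_i=\emptyset$ under a compact exhaustion, $\vol_f(E_i)\downarrow 0$, not $\downarrow\vol_f(E)$; only the former (via $\ep_i\to 0$ and $L'<\infty$) is actually used in your argument.
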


Theorem \ref{cmv-proc}, for the case that $\bar M$ has bounded geometry, was proved by: (i) Frensel \cite{fr} and by do Carmo, Wang and Xia \cite{cwx} for the case that the mean curvature vector field is bounded in norm (the case $p=\infty$); (ii) Fu and Xu \cite{fx} for the case that the total mean curvature is finite (the case $p=m$); and Cheung and (iii) Leung \cite{cl} for the case that the mean curvature vector has finite $L^p$-norm for some $p>m$.

We were informed of an independent manuscript of Debora Impera and Michele Rimoldi \cite{ir} which proves a similar version of Theorem \ref{desig-Sobolev} for the case that $M$ is a hypersurface in a weighted manifold $\bar M$ with nonpositive sectional curvature. The authors thank them for useful comments. 

\section{Preliminaries}
We assume the notations  in the introduction. Consider the following

\begin{definition} Let $X:M\to T\bar M$ be a $C^1$ vector field. The {\it f-divergence} of $X$ is defined by: 
\begin{equation*}
\m D_f X =e^f\dv_M (e^{-f}X^T)
\end{equation*}
\end{definition}
By a direct computations, the following holds.
\begin{proposition}\label{phi vector} Let $X:M^m\to T\bar M$ be a $C^1$-vector field and 
$g\in C^1(M)$. Then it holds:
\begin{enumerate}[(A)]
\item\label{f tangent} $\m D_f X = \dv_M X +\lan  H - \na f, X\ran= \dv_M X + \lan H_f - \bar\na f, X\ran$, where $\na f=(\bar\na f)^T$ is the gradient vector field of the restriction $f|_M$;
\item\label{f function} $\m D_f (gX)=g\m D_fX + \lan X, \na g\ran$.
\end{enumerate}
\end{proposition}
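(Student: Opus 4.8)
The plan is to verify both identities by the ``direct computation'' indicated in the statement; the proof is short, and I would organize it around a single auxiliary fact relating $\dv_M X^T$ to $\dv_M X$. First I would fix a point of $M$ and a local orthonormal frame $\{e_1,\dots,e_m\}$ of $TM$, and recall that for a $C^1$ field $Y:M\to T\bar M$ one has $\dv_M Y=\sum_i\langle\bar\na_{e_i}Y,e_i\rangle$, while the (unnormalized) mean curvature vector is $H=\sum_i\II(e_i,e_i)=\sum_i(\bar\na_{e_i}e_i)^\perp$. Writing $X=X^T+X^\perp$ and differentiating $\langle X^\perp,e_i\rangle\equiv 0$ in the direction $e_i$ gives $\langle\bar\na_{e_i}X^\perp,e_i\rangle=-\langle X^\perp,\bar\na_{e_i}e_i\rangle=-\langle X^\perp,\II(e_i,e_i)\rangle$; summing over $i$ yields $\dv_M X^\perp=-\langle X,H\rangle$, hence
\begin{equation*}
\dv_M X^T=\dv_M X+\langle H,X\rangle .
\end{equation*}

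For item (\ref{f tangent}) I would apply the Leibniz rule to the definition of $\m D_f$: since $e^{-f}X^T$ is tangent and $\bar\na(e^{-f})=-e^{-f}\bar\na f$, one gets $e^f\dv_M(e^{-f}X^T)=\dv_M X^T-\langle\na f,X^T\rangle=\dv_M X^T-\langle\na f,X\rangle$, the last step because $\na f\in TM$ so the normal part of $X$ does not contribute. Combining with the displayed identity gives $\m D_f X=\dv_M X+\langle H-\na f,X\rangle$. The second equality in (\ref{f tangent}) is then purely algebraic: decomposing $\bar\na f=\na f+(\bar\na f)^\perp$ and recalling $H_f=H+(\bar\na f)^\perp$, we get $H_f-\bar\na f=H-\na f$, so the two inner products agree. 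For item (\ref{f function}) I would observe that $(gX)^T=gX^T$, hence $\m D_f(gX)=e^f\dv_M\big(g\,e^{-f}X^T\big)$; applying $\dv_M(gW)=g\,\dv_M W+\langle\na g,W\rangle$ with $W=e^{-f}X^T$ and using $\langle\na g,X^T\rangle=\langle\na g,X\rangle$ produces $\m D_f(gX)=g\,e^f\dv_M(e^{-f}X^T)+\langle\na g,X\rangle=g\,\m D_f X+\langle X,\na g\rangle$.

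I do not expect any real obstacle here — the content is entirely bookkeeping. The only points deserving care are the conventions (the sign of $\II$, the fact that $H$ is the trace of $\II$ rather than its average, and that $\dv_M Y$ for an ambient-valued $Y$ is independent of the chosen frame), and checking that the mixed tangential–normal terms $\langle\na f,X^\perp\rangle$ and $\langle\na g,X^\perp\rangle$ vanish because $\na f$ and $\na g$ are tangent to $M$.
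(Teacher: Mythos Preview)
Your proof is correct and is exactly the kind of ``direct computation'' the paper alludes to but does not write out: the paper simply states that the proposition follows ``by a direct computation'' and gives no further details. Your organization around the identity $\dv_M X^T=\dv_M X+\langle H,X\rangle$ is the natural way to carry this out, and the remaining steps are standard applications of the product rule together with the tangential/normal decomposition you describe.
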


Fix a point $\xi\in M$ and consider $r_\xi=d_{\bar M}(\cdot\,,\xi)$ the distance function in $\bar M$ from $\xi$. Assume that the radial curvature of $\bar M$ with basis point $\xi$ satisfies 
\begin{equation}\label{radial}
(\bar K_{\rad})_{\xi} \leq \m K(r_\xi),
\end{equation}
where $\m K:\real\to [0,\infty)$ is a non-negative even continuous function. Let $h:(0,r_0)\to (0,s_0)$ be the increasing function as defined in  (\ref{cauchy}).  

Let $\m B=\m B_{r_0}(\xi)$ be the geodesic ball of $\bar M$ with center $\xi$ and radius $r_0$. Consider the radial vector field 
\begin{equation}\label{radial}
X_\xi=h(r_\xi)\bar\na r_\xi,
\end{equation}
defined on $\m B\cap V$, where $V$ is a normal neighborhood of $\xi$ in $\bar M$ and $\bar\na r_\xi$ is the gradient vector field of $r_\xi$ in $\bar M$. By the hessian comparison theorem (see Theorem 2.3 page 29 of \cite{rsp}), we have that in $\m B$ the following holds
\begin{equation}\label{hess-comparison}
\hs_{r_\xi}(v,v)\ge \frac{h'(r_\xi)}{h(r_\xi)}(1-\lan \bar\na r_\xi,v\ran^2),
\end{equation}
for all vector field $v\in T\bar M$ with $|v|=1$.

\begin{proposition}\label{hessiana} Under the notations above, it holds that
\begin{equation}\label{prop_comparison}
\m D_f X_\xi\geq m h'(r_\xi)+h(r_\xi)\lan H_f - \bar\na f,\bar\na r_\xi\ran.
\end{equation} 
\end{proposition}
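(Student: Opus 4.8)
The plan is to split off the normal term using Proposition \ref{phi vector} and then reduce everything to the hessian comparison (\ref{hess-comparison}). First, applying Proposition \ref{phi vector}(\ref{f tangent}) to the vector field $X=X_\xi=h(r_\xi)\bar\na r_\xi$ and using that $X_\xi$ is a scalar multiple of the unit vector $\bar\na r_\xi$, one obtains
\[
\m D_f X_\xi=\dv_M X_\xi+\lan H_f-\bar\na f,X_\xi\ran=\dv_M X_\xi+h(r_\xi)\lan H_f-\bar\na f,\bar\na r_\xi\ran ,
\]
so it suffices to prove the purely geometric estimate $\dv_M X_\xi\ge m\,h'(r_\xi)$ on $\m B\cap V\cap M$.

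For this, at a point of $\m B\cap V\cap M$ different from $\xi$ I would take a local orthonormal frame $\{e_1,\dots,e_m\}$ tangent to $M$ and differentiate $X_\xi=h(r_\xi)\bar\na r_\xi$ along it. Using $e_i r_\xi=\lan\bar\na r_\xi,e_i\ran$, $|\bar\na r_\xi|=1$ and $\lan\bar\na_{e_i}\bar\na r_\xi,e_i\ran=\hs_{r_\xi}(e_i,e_i)$, the product rule gives
\[
\dv_M X_\xi=\sum_{i=1}^m\lan\bar\na_{e_i}X_\xi,e_i\ran=h'(r_\xi)\sum_{i=1}^m\lan\bar\na r_\xi,e_i\ran^2+h(r_\xi)\sum_{i=1}^m\hs_{r_\xi}(e_i,e_i).
\]
Since $h(r_\xi)>0$ on $(0,r_0)$, multiplying the hessian comparison (\ref{hess-comparison}) applied to each unit vector $e_i$ by $h(r_\xi)$ preserves the inequality, hence $h(r_\xi)\sum_i\hs_{r_\xi}(e_i,e_i)\ge h'(r_\xi)\bigl(m-\sum_i\lan\bar\na r_\xi,e_i\ran^2\bigr)$; adding the first sum, the $\sum_i\lan\bar\na r_\xi,e_i\ran^2$ terms cancel and we are left with $\dv_M X_\xi\ge m\,h'(r_\xi)$. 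Combined with the first display, this establishes the proposition at every point of $\m B\cap V\cap M\setminus\{\xi\}$.

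Finally I would dispose of the base point $\xi$ itself by continuity: there $h(r_\xi)=0$, and since $h(t)=t+O(t^3)$ the field $X_\xi=\tfrac12\,\tfrac{h(r_\xi)}{r_\xi}\,\bar\na(r_\xi^2)$ extends across $\xi$ as a $C^1$ field whose first-order part in normal coordinates is the radial position field, so $\m D_f X_\xi(\xi)=\dv_M X_\xi(\xi)=m=m\,h'(0)$ and the estimate holds there as well. I do not anticipate a genuine obstacle: the core is the one-line computation of $\dv_M X_\xi$ together with the hessian comparison theorem, and the only delicate points are the bookkeeping of the signs of $h$ and $h'$ on $(0,r_0)$ (so that the comparison may be scaled by $h(r_\xi)$) and the mild non-smoothness of $r_\xi$ at $\xi$, handled by the continuity remark above.
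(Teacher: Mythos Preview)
Your proof is correct and follows essentially the same route as the paper: both arguments reduce the inequality to the hessian comparison (\ref{hess-comparison}) after peeling off the $\lan H_f-\bar\na f,\bar\na r_\xi\ran$ term via Proposition~\ref{phi vector}. The only cosmetic difference is that the paper first applies the product rule at the level of $\m D_f$ (Proposition~\ref{phi vector}\ref{f function}) to write $\m D_f X_\xi=h(r_\xi)\m D_f\bar\na r_\xi+h'(r_\xi)|\na r_\xi|^2$ and then invokes item~\ref{f tangent} on $\bar\na r_\xi$, whereas you apply item~\ref{f tangent} directly to $X_\xi$ and expand $\dv_M X_\xi$ in an orthonormal frame; the resulting computations are identical line by line, with $\sum_i\lan\bar\na r_\xi,e_i\ran^2=|\na r_\xi|^2$. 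Your continuity remark at the base point $\xi$ is an extra bit of care that the paper omits.
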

\begin{proof}
Using Proposition \ref{phi vector} we have 
\begin{equation}\label{Dfx1}
\m D_f X_\xi = h(r_\xi)\m D_f\bar\na r_\xi + h'(r_\xi)|\na r_\xi|^2.
\end{equation}
Furthermore, using (\ref{hess-comparison}), we obtain
\begin{eqnarray}\label{Dfx2}
\m D_f\bar\na r_\xi &=& \dv_M \bar\na r_\xi + \lan  H_f - \bar\na f, \bar\na r_\xi\ran \nonumber\\ &\geq& \dfrac{h'(r_\xi)}{h(r_\xi)}(m-|\na r_\xi|^2)+\lan H_f - \bar\na f , \bar\na r_\xi\ran.
\end{eqnarray}
Combining (\ref{Dfx1}) and (\ref{Dfx2}),  the result follows.
\end{proof}

Let $M$ be a complete manifold with (possibly nonempty) boundary $\p M$ 
and let $\vp:M\to [0,\infty)$ be a compactly supported nonnegative $C^1$ function such that $\vp|_{\p M}=0$. Let $\la\in C^1(\real)$ 
be a non-negative and non-decreasing function satisfying $\la(t)=0$, 
for $t\leq 0$. We define the following real-variable functions:
\begin{equation*}\label{phi and psi}
\begin{array}{l}
\phi_{\xi}(R)=\phi_{\xi,\vp,\la}(R)=\int_M \la(R-r_\xi(x))\vp d\mu;\\
\\
\psi_{\xi}(R)=\psi_{\xi,\vp,\la}(R)=\int_{M}\la(R-r_\xi(x))(|\na \vp+\vp(H_f-\bar\na f )|d\mu;\\
\\
\bar\phi_{\xi}(R)=\bar\phi_{\xi,\vp}(R)=\int_{M\cap \m B_{R}(\xi)} \vp d\mu;\\
\\
\bar\psi_{\xi}(R)=\bar\psi_{\xi,\vp}(R)=\int_{M\cap \m B_R(\xi)} (|\na \vp+\vp(H_f-\bar\na f)|d\mu.
\end{array}
\end{equation*}
Our first lemma says the following.
\begin{lemma}\label{l4.1} It holds that 
\begin{equation*}
-\frac{d}{dR}\left(h(R)^{-m}\phi_{\xi}(R)\right)\leq h(R)^{-m}\psi_{\xi}(R),
\end{equation*}
for all $0<R<R_0=\min\{\inj_\vp,r_0\}$.
\end{lemma}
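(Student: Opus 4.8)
The plan is to differentiate the two integrals defining $\phi_\xi$ and $\psi_\xi$ with respect to $R$ and to use Proposition~\ref{hessiana} together with Proposition~\ref{phi vector}(\ref{f function}) to control the derivative. First I would set $u=\la(R-r_\xi)$, which is a Lipschitz nonnegative function vanishing where $r_\xi\ge R$, hence (for $R<R_0$) supported in the ball $\m B$ on which the Hessian comparison and the radial vector field $X_\xi$ are defined. Since $\la$ is nondecreasing, $u_R:=\frac{\p u}{\p R}=\la'(R-r_\xi)\ge 0$, and $\na u = -\la'(R-r_\xi)\,\na r_\xi$. The key identity is to apply the $f$-divergence theorem on $M$: because $\vp$ is compactly supported and vanishes on $\p M$, and $u$ vanishes outside a compact set, we have $\int_M \m D_f(u\vp X_\xi)\,d\mu=0$ (this is the weighted divergence theorem, $\int_M e^f\dv_M(e^{-f}Y^T)\,d\mu=\int_M\dv_M(e^{-f}Y^T)\,dM=0$ by Stokes, the boundary and support conditions killing the boundary term).

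Next I would expand $\m D_f(u\vp X_\xi)$ using Proposition~\ref{phi vector}(\ref{f function}) twice: $\m D_f(u\vp X_\xi)=u\vp\,\m D_f X_\xi + \lan X_\xi,\na(u\vp)\ran = u\vp\,\m D_f X_\xi + u\lan X_\xi,\na\vp\ran + \vp\lan X_\xi,\na u\ran$. Integrating and using $\int_M\m D_f(u\vp X_\xi)\,d\mu=0$ gives
\begin{equation*}
\int_M u\vp\,\m D_f X_\xi\,d\mu = -\int_M u\lan X_\xi,\na\vp\ran\,d\mu - \int_M \vp\lan X_\xi,\na u\ran\,d\mu.
\end{equation*}
Now I bound the left side below by Proposition~\ref{hessiana}: $u\vp\,\m D_f X_\xi \ge m h'(r_\xi)u\vp + h(r_\xi)u\vp\lan H_f-\bar\na f,\bar\na r_\xi\ran$. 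On the right side, $|X_\xi|=h(r_\xi)|\bar\na r_\xi|=h(r_\xi)$ on $\m B$, so $|\lan X_\xi,\na\vp\ran|\le h(r_\xi)|\na\vp|$, and the first term combines with the curvature term to produce $h(r_\xi)u(|\na\vp|+\vp|H_f-\bar\na f|)\ge h(r_\xi)u\,|\na\vp+\vp(H_f-\bar\na f)|$ — wait, one must be slightly careful: actually one uses $-u\lan X_\xi,\na\vp\ran - h(r_\xi)u\vp\lan H_f-\bar\na f,\bar\na r_\xi\ran \le h(r_\xi)u(|\na\vp| + \vp|H_f-\bar\na f|)$, and then that the integrand $|\na\vp+\vp(H_f-\bar\na f)|$ appearing in $\psi_\xi$ dominates (here I note the paper's $\psi_\xi$ uses $|\na\vp+\vp(H_f-\bar\na f)|\le |\na\vp|+\vp|H_f-\bar\na f|$, so I should rather aim for the reverse; checking the statement, the inequality goes the safe direction since we want an \emph{upper} bound involving $\psi_\xi$, and $|\na\vp|+\vp|H_f-\bar\na f|\ge|\na\vp+\vp(H_f-\bar\na f)|$ — so in fact I want to be producing $|\na\vp|+\vp|H_f-\bar\na f|$ and it suffices that $\psi_\xi$ uses that; I will follow the paper's definition as written). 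For the last term, $\na u=-\la'(R-r_\xi)\bar\na r_\xi$ along $M$ projected, giving $\vp\lan X_\xi,\na u\ran = -h(r_\xi)\la'(R-r_\xi)\vp|\na r_\xi|^2$, but more usefully $\la'(R-r_\xi) = u_R$, so $-\int_M\vp\lan X_\xi,\na u\ran d\mu = \int_M h(r_\xi)\vp\,\la'(R-r_\xi)|\na r_\xi|^2 d\mu$, which is bounded above by $\int_M h(r_\xi)\vp\,u_R\,d\mu$.

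The final step is to recognize the derivatives: $\phi_\xi'(R)=\int_M\la'(R-r_\xi)\vp\,d\mu=\int_M u_R\vp\,d\mu$, and since $\la(R-r_\xi)=0$ precisely where $r_\xi\ge R$, on the support of $u$ and $u_R$ one has $r_\xi< R$ hence $h(r_\xi)<h(R)$ (as $h$ is increasing on $(0,r_0)$) and $h'(r_\xi)>0$. Therefore $m\int_M h'(r_\xi)u\vp\,d\mu + h(R)\int_M u_R\vp\,d\mu \ge m\int_M h'(r_\xi)u\vp\,d\mu + \int_M h(r_\xi)u_R\vp\,d\mu$, and combining with the bound on the right side, after replacing $h(r_\xi)\le h(R)$ in the $\psi_\xi$-term, we arrive at
\begin{equation*}
m\int_M h'(r_\xi)u\vp\,d\mu + h(R)\phi_\xi'(R) \le h(R)\psi_\xi(R),
\end{equation*}
after also bounding $\int h(r_\xi)u(\cdots)\le h(R)\psi_\xi(R)$. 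Finally, compute $\frac{d}{dR}\big(h(R)^{-m}\phi_\xi(R)\big) = -m h(R)^{-m-1}h'(R)\phi_\xi(R) + h(R)^{-m}\phi_\xi'(R)$; multiplying the displayed inequality by $h(R)^{-m-1}$ and noting $m h'(R)h(R)^{-m-1}\phi_\xi(R) = m h'(R) h(R)^{-m-1}\int_M u\vp\,d\mu$, I need $h'(R)\phi_\xi(R)\ge \int_M h'(r_\xi)u\vp\,d\mu$ — which again holds because $h'$ is decreasing... \emph{this is the point requiring care}: on $(0,r_0)$ the function $h'$ need not be monotone in general, so the clean comparison $h'(r_\xi)\le h'(R)$ is not automatic. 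I expect this monotonicity/sign bookkeeping for $h'$ to be the main obstacle; the resolution is either that $\m K\ge0$ forces $h''=-\m Kh\le0$ so $h'$ \emph{is} nonincreasing on $(0,r_0)$ where $h>0$, giving $h'(r_\xi)\ge h'(R)$ for $r_\xi<R$ (the inequality I actually need, since it is multiplied by a negative sign after moving terms) — so in fact $\m K\ge0$ is exactly what makes the argument close. Assembling: $-\frac{d}{dR}(h(R)^{-m}\phi_\xi(R)) = m h'(R)h(R)^{-m-1}\phi_\xi(R) - h(R)^{-m}\phi_\xi'(R) \le h(R)^{-m-1}\big(m\int h'(r_\xi)u\vp + h(R)\phi_\xi'(R)\big) - \cdots \le h(R)^{-m}\psi_\xi(R)$, which is the claim; I would carry out this last algebra carefully to make sure every inequality points the right way, exploiting $h''\le0$ and $h,h'>0$ on $(0,R_0)$ throughout.
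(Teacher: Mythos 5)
Your overall route is the same as the paper's: kill the boundary term with the weighted divergence theorem applied to $\m D_f(\la(R-r_\xi)\vp X_\xi)$, expand via Proposition~\ref{phi vector}(\ref{f function}), bound $\m D_f X_\xi$ from below with Proposition~\ref{hessiana}, and then use the monotonicity of $h$ and $h'$ on the support of $\la$. Your observation about $h'$ is exactly right and not actually a lingering worry: since $\m K\ge 0$ and $h>0$ on $(0,r_0)$, the equation $h''=-\m K h\le 0$ makes $h'$ nonincreasing there, so $h'(r_\xi)\ge h'(R)$ wherever $\la(R-r_\xi)\ne 0$, which is precisely the inequality you need (and is what the paper uses to pass from $\int\la(R-r)\vp\,h'(r)$ down to $h'(R)\phi_\xi(R)$).

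Where the proposal genuinely breaks is in how you absorb the $\na\vp$ and $H_f-\bar\na f$ contributions into $\psi_\xi$. You bound $-u\lan X_\xi,\na\vp\ran$ and $-h(r_\xi)u\vp\lan H_f-\bar\na f,\bar\na r_\xi\ran$ \emph{separately} by Cauchy--Schwarz, producing $h(r_\xi)u\bigl(|\na\vp|+\vp|H_f-\bar\na f|\bigr)$. But the paper's $\psi_\xi$ is built from the \emph{smaller} quantity $|\na\vp+\vp(H_f-\bar\na f)|$, so this estimate points the wrong way and, as written, only proves a weaker statement with $\psi_\xi$ replaced by the larger integral. You notice the mismatch but do not resolve it; ``I will follow the paper's definition as written'' does not close the gap. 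The correct step (and what the paper does) is to keep the two inner products together \emph{before} estimating: since $X_\xi=h(r_\xi)\bar\na r_\xi$ with $|\bar\na r_\xi|=1$, one has
\begin{equation*}
-u\lan X_\xi,\na\vp\ran-h(r_\xi)u\vp\lan H_f-\bar\na f,\bar\na r_\xi\ran
=-h(r_\xi)\,u\,\lan \na\vp+\vp(H_f-\bar\na f),\,\bar\na r_\xi\ran
\le h(r_\xi)\,u\,\bigl|\na\vp+\vp(H_f-\bar\na f)\bigr|,
\end{equation*}
and then $h(r_\xi)\le h(R)$ on $\{r_\xi<R\}$ gives exactly $h(R)\psi_\xi(R)$. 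With that one correction, the rest of your bookkeeping (using $|\na r_\xi|\le 1$ on the $\la'$-term to get $h(R)\phi_\xi'(R)$, dividing by $h(R)^{m+1}$, and recognizing the total derivative) goes through and reproduces the lemma.
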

\begin{proof}  
We denote by $r=r_\xi$ and let $X=X_\xi$ be defined in $\m B_{R_0}(\xi)$. Using \ref{f function} we obtain that
\begin{eqnarray}\label{la1}
\m D_f(\la(R-r)\vp\,X) &=& \la(R-r)\vp \m D_fX + \lan \na(\la(R-r)\vp),  X\ran \\ \nonumber
&=& \la(R-r)\vp\m D_fX + \la(R-r)\lan\na \vp,X\ran \\ && -\la'(R-r)\vp\lan\na r,X\ran. \nonumber
\end{eqnarray}
Since $\spp\vp$ is compact and $\vp|_{\p M}=0$, using  Item \ref{f tangent} of Proposition \ref{phi vector} and the divergence theorem, we obtain
\begin{equation}\label{la2}
\int_M \m D_f(\la(R-r)\vp\,X)d\mu= 0.
\end{equation}
Thus, by (\ref{la1}) and (\ref{la2}), we obtain 
\begin{eqnarray}\label{quase}
\int_M \la(R-r)\vp \m D_fX d\mu &=& \int_M \la'(R-r)\vp h(r)\lan\na r,\bar\na r\ran d\mu\\ &
& - \int_M \la(R-r)h(r)\lan \na \vp, \bar \na r\ran d\mu \nonumber.
\end{eqnarray}
Using that:
\begin{enumerate}[(a)]
\item\label{la e la'} the functions $\la$ and $\la'$ are nonnegative;
\item the function $h$ is positive and increasing in $(0,r_0)$;
\item\label{zero} $\la(R-r(x))=\la'(R-r(x))=0$ in the subset $\{x\in M \mid r(x)\geq R\}$.
\end{enumerate}
Since $h''=-\m K h\leq 0$ in $(0,r_0)$ we have that $h'$ is non-increasing in $(0,r_0)$. By using \ref{la e la'}, \ref{zero} and Proposition \ref{hessiana}, we obtain that 
\begin{equation*}
\int_M \la(R-r)\vp \m D_fX d\mu \ge mh'(R)\phi(R) + \int_M \la(R-r)\vp h(r) \lan H_f-\bar\na f,\bar\na r\ran.
\end{equation*}
Thus, since $|\na r|\le 1$, using (\ref{quase}), \ref{la e la'} and \ref{zero} we obtain
\begin{eqnarray*}
m h'(R)\phi_\xi(R) &\le& h(R) \int_M  \la'(R-r)\vp d\mu \\&& -\, \int_M \la(R-r) h(r)\lan \na\vp + \vp(H_f-\bar\na f), \bar\na r\ran\\&& +\, h(R)\Big(\frac{d}{dR}\phi_\xi(R) + \psi_\xi(R)\Big)
\end{eqnarray*}
This implies that 
\begin{eqnarray*}
\frac{d}{dR} \left(h(R)^{-m}\phi_\xi(R)\right) &=& h(R)^{-m}\Big(\frac{ d\phi_\xi}{d R}(R)-m\frac{h'(R)}{h(R)}\phi_\xi(R)\Big)\\ &\geq&  h(R)^{-m}\Big(\frac{d\phi_\xi}{d R}(R)-\big(\frac{d\phi_{\xi}}{dR}(R) + \psi_{\xi}(R)\big)\Big)\\ &=& -\, h(R)^{-m} \psi_{\xi}(R).
\end{eqnarray*}
Lemma \ref{l4.1} is proved.

\end{proof}

Take $\ka\in (0,1)$ and let $J=J_{(\ka,\vp,f)}\geq 0$ be the constant defined by 
\begin{equation}\label{J} 
J=\left(\frac{\om_m^{-1}e^{f^*}}{1-\ka}\int_M \vp d\mu\right)^{\frac{1}{m}}.
\end{equation}

Our next lemma is the following result.

\begin{lemma}\label{l4.2} Fix $\xi\in M$ satisfying $\vp(\xi)\ge 1$.
Assume that $0<J <s_0$ and set $\al=\al(\ka,\vp)\in (0, r_0)$ given by $h(\al)= J$. Assume further that $t\al \leq R_0$, for some $t>1$.
Then there exists $R\in (0, \al)$ such that
\begin{equation}\label{eql4.2}
\bar\phi_\xi(tR)\leq \frac{2\al}{\ka} t^{m-1}\,\bar\psi_\xi(R).
\end{equation}
\end{lemma}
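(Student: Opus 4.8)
The plan is to argue by contradiction. Assume, contrary to the claim, that (\ref{eql4.2}) fails for every $R\in(0,\al)$, i.e. $\bar\psi_\xi(R)<\frac{\ka}{2\al}t^{1-m}\bar\phi_\xi(tR)$ for all $R\in(0,\al)$. First I would recast Lemma \ref{l4.1} in terms of the "sharp" functions $\bar\phi_\xi,\bar\psi_\xi$: taking admissible $\la=\la_k$ increasing pointwise to the Heaviside function $\chi_{(0,\infty)}$ one has $\phi_{\xi,\vp,\la_k}\to\bar\phi_\xi$ and $\psi_{\xi,\vp,\la_k}\to\bar\psi_\xi$, so that Lemma \ref{l4.1} gives, for $0<R_1<R_2<R_0$,
\[h(R_1)^{-m}\bar\phi_\xi(R_1)\le h(R_2)^{-m}\bar\phi_\xi(R_2)+\int_{R_1}^{R_2}h(s)^{-m}\bar\psi_\xi(s)\,ds;\]
equivalently $R\mapsto h(R)^{-m}\bar\phi_\xi(R)+\int_0^R h(s)^{-m}\bar\psi_\xi(s)\,ds$ is non-decreasing on $(0,R_0)$. (This limiting procedure also circumvents the only regularity issue, namely that $\bar\phi_\xi,\bar\psi_\xi$ are a priori merely of bounded variation in $R$.)

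Next I would evaluate the two ends of this monotone quantity. Letting $R_1\to0^+$: since $\xi\in M$ and the immersion is smooth, the $m$-dimensional volume of $M\cap\m B_R(\xi)$ is $(1+o(1))\om_m R^m$ as $R\to 0$, and $\vp,f$ are continuous, so $h(R)^{-m}\bar\phi_\xi(R)\to\om_m\vp(\xi)e^{-f(\xi)}$, which by $\vp(\xi)\ge1$ and $f(\xi)\le f^*$ is $\ge\om_m e^{-f^*}$. Taking $R_2=\al$ and using $h(\al)=J$ together with (\ref{J}),
\[h(\al)^{-m}\bar\phi_\xi(\al)\le h(\al)^{-m}\int_M\vp\,d\mu=J^{-m}\int_M\vp\,d\mu=(1-\ka)\,\om_m e^{-f^*}.\]
Combining the monotone inequality with these two facts yields the key estimate $\ka\,\om_m e^{-f^*}\le\int_0^\al h(s)^{-m}\bar\psi_\xi(s)\,ds$.

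It remains to contradict this using the standing assumption. Since $h(0)=0$ and $h''=-\m K h\le0$, $h$ is concave with $h(0)=0$, so $h(ts)\le t\,h(s)$ for $s\in(0,\al]$; hence with $\La:=\sup_{0<u\le t\al}h(u)^{-m}\bar\phi_\xi(u)<\infty$ we have $h(s)^{-m}\bar\phi_\xi(ts)\le t^m\La$ on $(0,\al]$. Inserting the assumption and this bound into the monotone inequality (with $R_2=\al$) gives $h(R)^{-m}\bar\phi_\xi(R)\le(1-\ka)\om_m e^{-f^*}+\frac{\ka t}{2}\La$ for $R\in(0,\al]$, while $h(R)^{-m}\bar\phi_\xi(R)\le h(\al)^{-m}\int_M\vp\,d\mu=(1-\ka)\om_m e^{-f^*}$ for $R\in[\al,t\al]$; taking the supremum over $R\in(0,t\al]$ yields $\La\le(1-\ka)\om_m e^{-f^*}+\frac{\ka t}{2}\La$, hence $\La\le\om_m e^{-f^*}$ as soon as $t\le2$. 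Then $\bar\phi_\xi(ts)\le h(ts)^m\La\le t^m h(s)^m\om_m e^{-f^*}$ on $(0,\al]$, so $\int_0^\al h(s)^{-m}\bar\phi_\xi(ts)\,ds\le t^m\al\,\om_m e^{-f^*}\le 2\,t^{m-1}\al\,\om_m e^{-f^*}$, and the standing assumption forces $\int_0^\al h(s)^{-m}\bar\psi_\xi(s)\,ds<\frac{\ka}{2\al}t^{1-m}\cdot2t^{m-1}\al\,\om_m e^{-f^*}=\ka\,\om_m e^{-f^*}$, contradicting the key estimate.

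I expect the last paragraph to be the main obstacle: one must choose the auxiliary quantity ($\La$, or a finer dyadic splitting of $\int_0^\al h(s)^{-m}\bar\phi_\xi(ts)\,ds$) so that the bootstrap closes and the constant comes out to be exactly $\frac{2\al}{\ka}t^{m-1}$ — this is precisely where $\ka\in(0,1)$ and $t\al\le R_0$ enter, and where, as presented, one needs $t\le2$ ($t=2$ being the case relevant to Theorem \ref{desig-Sobolev}); for larger $t$ one must refine the bound on $\int_0^\al h(s)^{-m}\bar\phi_\xi(ts)\,ds$ by additionally using $\bar\phi_\xi\le\int_M\vp\,d\mu$ on the range where $ts$ exceeds the radius of $\spp\vp$, where $h(s)^{-m}\bar\phi_\xi(ts)$ decays like $s^{-m}$ (via the chord bound $h(s)\ge\frac{h(\al)}{\al}s$ on $(0,\al]$).
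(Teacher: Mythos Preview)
Your overall strategy coincides with the paper's: argue by contradiction, pass from Lemma~\ref{l4.1} to its integrated form for $\bar\phi_\xi,\bar\psi_\xi$ via an approximation $\la_k\to\chi_{(0,\infty)}$, use the asymptotics $h(R)^{-m}\bar\phi_\xi(R)\to\om_m\vp(\xi)e^{-f(\xi)}\ge\om_m e^{-f^*}$ as $R\to0$ together with $h(\al)^{-m}\bar\phi_\xi(\al)\le(1-\ka)\om_m e^{-f^*}$, and close a bootstrap on a supremum of $h(R)^{-m}\bar\phi_\xi(R)$. The paper carries out exactly these steps (its inequality (\ref{sup}) is your monotone inequality, and the endpoint evaluations are identical).

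The substantive discrepancy is in the bootstrap. Your quantity $\La=\sup_{(0,t\al]}h(u)^{-m}\bar\phi_\xi(u)$ and the crude bound $h(s)^{-m}\bar\phi_\xi(ts)\le t^m\La$ force you to require $t\le2$ for the contradiction, as you note. However, your parenthetical ``$t=2$ being the case relevant to Theorem~\ref{desig-Sobolev}'' is not quite right: the proof of Theorem~\ref{desig-Sobolev} fixes $t>2$ (this is needed so that $\be\in[2/t,1)$ is nonempty in the covering argument) and only lets $t\to2^+$ at the very end. So the lemma must be established for $t$ strictly larger than $2$, which your argument as written does not cover.

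The fix you sketch in your last sentence is exactly what the paper does. After the change of variable $\si=tR$, the paper splits $\int_0^{t\al}h(\si/t)^{-m}\bar\phi_\xi(\si)\,d\si$ at $\si=\al$: on $(0,\al)$ it uses $h(\si/t)\ge t^{-1}h(\si)$, and on $(\al,t\al)$ it uses your chord bound $h(\si/t)\ge\frac{\si}{t\al}h(\al)$ together with $\bar\phi_\xi(\si)\le\int_M\vp\,d\mu$ and $\int_\al^{t\al}\si^{-m}\,d\si\le\frac{\al^{1-m}}{m-1}$. This produces an extra term $\frac{h(\al)^{-m}}{m-1}\int_M\vp\,d\mu=\frac{1-\ka}{m-1}\om_m e^{-f^*}$, and the paper's bootstrap (with $\sup_{(0,\al)}$ rather than $\sup_{(0,t\al]}$) then yields the contradiction $1<\frac{1-\ka}{m-1}\le1$, valid for every $t>1$. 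In short: your plan is correct, but to prove the lemma as stated (and as used) you must actually carry out the split you describe rather than rely on the $t\le2$ shortcut.
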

\begin{proof} 

%First we observe that if we consider the function $\ti f=f-c$, for some constant $c$, then it hold that $H_{\ti f}=H_f$, $\bar\na \ti f=\bar\na f$ and $d\ti\mu=e^{-\ti f}dM=e^c d\mu$ hence $\bar\phi_{\xi,\ti f}(R)=e^c \bar\phi_{\xi,f}(R)$ and $\bar\psi_{\xi,\ti f}(R)=e^c \bar\psi_{\xi,f}(R)$. Thus, by multiplying the both sides of (\ref{eql4.2}) by a positive constant,  we can assume without loss of generality that $e^{-f(\xi)}\ge 1$.  

By Lemma \ref{l4.1},  
\begin{equation}\label{ti-h}
-\frac{d}{dR}(h(R)^{-m}\phi_\xi(R))\leq h(R)^{-m}\psi_\xi(R),
\end{equation}
for all $0<R<R_0$. 

Note that $0<\al\le R_0=\min\{\inj_{\vp},r_0\}$. Given $\si\in (0,\al)$, integrating the both sides of (\ref{ti-h}) on the interval $(\si,\al)$ we obtain
\begin{equation}\label{des_int}
h(\si)^{-m}\phi_\xi(\si) \leq  h(\al)^{-m}\phi_\xi(\al) + \int_{\si}^{\al} h(\tau)^{-m}\psi_\xi(\tau)d\tau.
\end{equation} 

Take $0<\ep<\si$ and let $\la:\real\to [0,1]$ be a nondecreasing $C^1$ function satisfying: 
\begin{equation}\label{lambda} \left\{
\begin{array}{l}
\la(t)= 1, \mbox{ for all } t\geq \ep;\\
\la(t)= 0, \mbox{ for all } t\leq 0;\\
0\leq \la(t)\leq 1, \mbox{ for all } t.
\end{array}\right.
\end{equation} 
Consider this function $\la$ in the definitions of $\phi_{\xi}=\phi_{\xi,\vp,\la}$ and $\psi_\xi=\psi_{\xi,\vp,\la}$. By (\ref{des_int}) and (\ref{lambda}), we obtain
\begin{eqnarray}\label{si}
\phi_\xi(\si)&=&\int_M \la(\si-r_\xi)\vp d\mu = \int_{M\cap \m B_\si(\xi)} \la(\si-r_\xi)\vp d\mu\\ &\geq& \int_{M\cap \m B_{\si-\ep}(\xi)} \la(\si-r_\xi)\vp d\mu=\int_{M\cap \m B_{\si-\ep}(\xi)}\vp d\mu\nonumber \\ \nonumber\\&=&m\bar\phi_\xi(\si-\ep)\nonumber
\end{eqnarray}
Since $0\leq\la(t)\leq 1$, for all $t$, and $\la(R-r_\xi(x))=0$ in $\{x\in M \mid r_\xi(x)\geq R\}$, we have that $\phi_\xi(\si)\leq \bar\phi_\xi(\si)$ and $\psi_\xi(\si)\leq \bar\psi_\xi(\si)$. Thus, by (\ref{des_int}) and (\ref{si}), we obtain the following.
\begin{equation}\label{ep}
h(\si)^{-m}\bar\phi_\xi(\si-\ep)\leq h(\al)^{-m}\bar\phi_\xi(\al) + \int_0^{\al} h(\tau)^{-m}\bar\psi_\xi(\tau)d\tau
\end{equation}
Since the inequality (\ref{ep}) does not depend on $\la$ we can take $\ep\to 0$. Thus we obtain  
\begin{equation}\label{sup}
\sup_{\si\in(0,\al)}\left(h(\si)^{-m}\bar\phi_\xi(\si)\right)\leq h(\al)^{-m}\bar\phi_\xi(\al) + \int_0^{\al} h(\tau)^{-m}\bar\psi_\xi(\tau)d\tau
\end{equation}

Now suppose that Lemma \ref{l4.2} is false. Then it holds that 
\begin{equation*}\label{false}
\bar\psi_\xi(R) <\frac{\ka}{2\al}t^{1-m}\bar\phi_\xi(tR),
\end{equation*}
for all $R\in (0,\al)$. Multiplying the both sides of this inequality by $h(R)^{-m}$, integrating on $(0,\al)$ and using the change of variable $\si=tR$ we obtain  
\begin{equation}\label{change}
\int_0^{\al} h(R)^{-m}\bar\psi_\xi(R)dR <\frac{\ka}{2\al}t^{-m}\int_0^{t\al}\big(h\big(\frac{\si}{t}\big)\big)^{-m}\bar\phi_\xi(\si)d\si.
\end{equation}
Given $0<\si<t\al \leq R_0$, using that  $h''=-\m K h\leq 0$ we have that $h$ is concave and increasing on $(0, \al)$. Thus we obtain the following.
\begin{equation}\label{if}\left\{
\begin{array}{l}
\mbox{If } \si\in (0, \al) \mbox{ then } h(t^{-1}\si)\geq t^{-1} h(\si), \mbox{  for all } t\geq 1;\\ \\
\mbox{If } \si\in (\al,t\al) \mbox{ then } 0<\frac{\si}{t\al}<1 \mbox{ and } \frac{\si}{t}=\frac{\si}{t\al}\al,\\ \mbox{ which implies that } h(\frac{\si}{t})\geq \frac{\si}{t\al}h(\al).
\end{array}\right.
\end{equation}
Using (\ref{if}) we obtain
\begin{eqnarray*}
\int_0^{t\al} \big(h\big(\frac{\si}{t}\big)\big)^{-m}\bar\phi_\xi(\si)d\si &\leq&  t^m \int_0^{\al} h(\si)^{-m}\bar\phi_\xi(\si)d\si \\&&+\, \left(\frac{h(\al)}{t\al}\right)^{-m} \int_{\al}^{t\al}\si^{-m}\bar\phi_\xi(\si)d\si.
\end{eqnarray*}
Since $\bar\phi_{\xi}(\si)\leq \int_M \vp d\mu$ and $\int_{\al}^{t\al}\si^{-m}d\si\leq \frac{\al^{1-m}}{m-1}$, we obtain
\begin{eqnarray}\label{t-al}
\int_0^{t\al} \big(h\big(\frac{\si}{t}\big)\big)^{-m}\bar\phi_\xi(\si)d\si &\leq& t^m \int_0^{\al} h(\si)^{-m}\bar\phi_\xi(\si)d\si \\&& + \, t^m\al\, \frac{h(\al)^{-m}}{m-1}\int_M \vp\, d\mu.\nonumber
\end{eqnarray}
It follows from (\ref{change}) and (\ref{t-al}) the following inequality.
\begin{eqnarray}\label{integral}
\frac{2}{\ka}\int_0^{\al}h(R)^{-m}\bar\psi_\xi(R)dR \nonumber &<& \frac{h(\al)^{-m}}{m-1}\int_M \vp d\mu + \frac{1}{\al} \int_0^{\al}h(\si)^{-m}\bar\phi_\xi(\si)d\si \nonumber\\ \nonumber \\ &\leq& \frac{h(\al)^{-m}}{m-1}\int_M \vp d\mu + \sup_{\si\in(0,\al)}\left(h(\si)^{-m}\bar\phi_\xi(\si)\right).
\end{eqnarray}
Using (\ref{sup}) and (\ref{integral}) we obtain
\begin{eqnarray*}
\frac{2}{\ka}\sup_{\si\in(0,\al)}\left(h(\si)^{-m}\bar\phi_\xi(\si)\right) &<& \frac{2}{\ka}\left(h(\al)^{-m}\bar\phi_\xi(\al)\right) + \frac{h(\al)^{-m}}{m-1}\int_M \vp d\mu \nonumber \\&&+ \sup_{\si\in(0,\al)}\left(h(\si)^{-m}\bar\phi_\xi(\si)\right),
\end{eqnarray*}
hence we obtain
\begin{equation}\label{final}
(\frac{2}{\ka}-1)\sup_{\si\in(0,\al)}\left(h(\si)^{-m}\bar\phi_\xi(\si)\right)< \frac{2}{\ka}\left(h(\al)^{-m}\bar\phi_\xi(\al)\right) + \frac{h(\al)^{-m}}{m-1}\int_M \vp d\mu.
\end{equation}
We recall that $h(0)=0$, $h'(0)=1$ and $h(\al)=J=\left(\frac{\om_m^{-1}e^{f^*}}{1-\ka}\int_M \vp e^{-f}dM\right)^{\frac{1}{m}}$. Thus we obtain
\begin{eqnarray*}\left\{
\begin{array}{l}
h(\al)^{-m}\bar\phi_\xi(\al) \leq h(\al)^{-m} \int_M \vp d\mu = J^{-m} (1-\ka)\om_m J^m e^{-f^*}=(1-\ka)\om_m e^{-f^*}; \\
\displaystyle\sup_{\si\in(0,\al)}\left(h(\si)^{-m}\bar\phi_\xi(\si)\right)\geq \displaystyle\limsup_{\si\to 0} \left(h(\si)^{-m}\bar\phi_\xi(\si)\right) = \om_m \left(\vp(\xi)e^{-f(\xi)}\right)\geq \om_m e^{-f^*}.
\end{array}\right.
\end{eqnarray*}
Thus, using (\ref{final}) we obtain 
\begin{equation*}
\left(\frac{2}{\ka}-1\right)\om_m< \frac{2(1-\ka)}{\ka}\om_m + \frac{1-\ka}{m-1}\om_m,
\end{equation*}
that is, $1<\frac{1-\ka}{m-1}\leq 1$, which is a contradiction. Lemma \ref{l4.2} is proved.
\end{proof}

\section{Proof of Theorem \ref{desig-Sobolev}}
Consider the set $A=\left\{\xi\in M \bigm| \vp(\xi)\geq 1\right\}.$
Take $t>2$ so that $t\al \le R_0=\min\{\inj_{\vp},r_0\}$ and set $\be\in [\frac{2}{t},1)$. Consider the sequence $R_j=\be^j\al$, with $j=0,1,\ldots$, and define the collection of subsets
\begin{equation*}
A_j=\left\{\xi\in A \bigm| \bar\phi_\xi(tR)\leq \frac{2\al}{\ka} t^{m-1}\bar\psi_\xi(R), \mbox{ for some } R\in [\be R_j,R_j)\right\}.
\end{equation*}
By Lemma \ref{l4.2}, $A=\sqcup_{j=0}^\infty A_j$. Consider the sequence of subsets $F_k\subset A$, with $k=0,1,\ldots$, defined inductively as follows: (I): $F_0=\emptyset$; (II): Assume that $F_0, \ldots, F_{k-1}$ is defined, with $k\geq 1$. For each $\ell>0$, let $S_{\ell}(\xi)=M\cap \m B_\ell(\xi)$. Consider 
\begin{equation*}
\label{D_k} D_k=\bar A_k-\cup_{j=1}^{k-1} \cup_{\xi\in F_j} S_{t\be R_j}(\xi).
\end{equation*}   
\begin{claim}\label{subset} There exists a finite subset $F_k\subset D_k$ satisfying: 
\begin{enumerate}[(i)]
\item\label{cover} $F_k\subset D_k \subset \cup_{\xi\in F_k} S_{t\be R_k}(\xi)$;
\item\label{disj} $\m B_{R_k}(\xi)\cap \m B_{R_k}(\xi')=\emptyset$, for all $\xi\neq \xi'\in F_k$.
\end{enumerate}
\end{claim}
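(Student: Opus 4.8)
The plan is to prove Claim~\ref{subset} by a Vitali-type packing argument. The key simplification is that all of the competing geodesic balls $\m B_{R_k}(\xi)$, $\xi\in D_k$, have the \emph{same} radius $R_k=\be^k\al$, so no greedy selection ordered by decreasing radius is needed; it suffices to take a maximal packing. Concretely I would let $F_k\subset D_k$ be a maximal subset with the property that the balls $\{\m B_{R_k}(\xi)\}_{\xi\in F_k}$ are pairwise disjoint in $\bar M$. Then property (ii) holds by construction and $F_k\subset D_k$ is automatic, so everything reduces to: (a) showing such a maximal $F_k$ is finite, and (b) deducing the covering property (i) from maximality.

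For (a) I would first observe that $D_k$ is compact. Indeed $A=\{\xi\in M:\vp(\xi)\ge 1\}$ is closed in $M$ (as $\vp$ is continuous) and contained in the compact set $\spp\vp$, hence compact; therefore $\bar A_k\subset A$ is compact, and since each $S_{t\be R_j}(\xi)=M\cap\m B_{t\be R_j}(\xi)$ is open in $M$, the set $D_k=\bar A_k\setminus\bigcup_{j=1}^{k-1}\bigcup_{\xi\in F_j}S_{t\be R_j}(\xi)$ is a closed subset of $\bar A_k$, hence compact; in particular its image in $\bar M$ is totally bounded. Cover that image by finitely many sets of $\bar M$-diameter strictly less than $R_k$, say $N$ of them. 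If two distinct $\xi,\xi'\in F_k$ had images in the same piece, then $d_{\bar M}(\xi,\xi')<R_k$, so $\xi'\in\m B_{R_k}(\xi)\cap\m B_{R_k}(\xi')$, contradicting disjointness; hence $|F_k|\le N<\infty$, and a maximal packing (for instance one of largest cardinality) exists.

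For (b), let $\xi_0\in D_k$. If $\m B_{R_k}(\xi_0)$ were disjoint from $\m B_{R_k}(\xi)$ for every $\xi\in F_k$, then $F_k\cup\{\xi_0\}$ would be a strictly larger packing with centers in $D_k$, contradicting maximality of $F_k$; hence there are $\xi\in F_k$ and a point $y\in\m B_{R_k}(\xi_0)\cap\m B_{R_k}(\xi)$, and the triangle inequality gives $d_{\bar M}(\xi_0,\xi)<2R_k$. Here the choice of parameters enters: since $t>2$ and $\be\ge 2/t$ we have $t\be\ge 2$, so $d_{\bar M}(\xi_0,\xi)<2R_k\le t\be R_k$, i.e. $\xi_0\in\m B_{t\be R_k}(\xi)$; as $\xi_0\in M$ this means $\xi_0\in S_{t\be R_k}(\xi)$. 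Therefore $D_k\subset\bigcup_{\xi\in F_k}S_{t\be R_k}(\xi)$, which is (i).

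I do not expect a genuine obstacle. The only two points needing any care are the compactness of $D_k$ — which is exactly where the compact support of $\vp$ is used, and what makes the packing finite — and the elementary inequality $t\be\ge 2$, which is precisely the enlargement factor turning a packing by $R_k$-balls into a covering by $t\be R_k$-balls. If one preferred to avoid total boundedness, the finiteness of $F_k$ could instead be obtained from a volume estimate: since $R_k\le R_0\le\inj_\vp$ the balls $\m B_{R_k}(\xi)$ are embedded, the radial curvature bound (\ref{rad-cond}) bounds their volumes from below, and they all lie in a fixed bounded neighborhood of $\bar A_k$; but the compactness argument above is shorter and needs nothing beyond $\spp\vp$ being compact.
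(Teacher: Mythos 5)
Your proof is correct and takes essentially the same Vitali packing approach as the paper: both rely on compactness of $D_k$ and on the elementary inequality $t\beta\ge 2$ to turn a pairwise-disjoint family of $R_k$-balls into a covering by $t\beta R_k$-balls. The only difference is cosmetic: the paper builds $F_k$ by a greedy selection and asserts termination from the finiteness of an auxiliary cover $\m C$, whereas you take a maximal packing outright and justify its finiteness via total boundedness of (the image of) $D_k$ — a cleaner way of supplying the termination step that the paper leaves largely implicit.
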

\begin{proof} Note that $D_k$ is compact, since $A$ is compact and $D_k$ is closed. Thus, there exists a finite subset $\m C\subset D_k$ satisfying $D_k\subset \cup_{\xi\in\m C} S_{t\be R_k}(\xi)$. Take $\xi_1\in \m C$. If $D_k\subset S_{t\be R_k}({\xi_1})$, we define $F_k=\{\xi_1\}$. Otherwise, take $\xi_2\in D_k-S_{t\be R_k}({\xi_1})$. Note that $\m B_{R_k}({\xi_1})\cap \m B_{R_k}({\xi_2})=\emptyset$, since $t\be R_k\geq 2R_k$. If $D_k\subset S_{t\be R_k}({\xi_1})\cup S_{t\be R_k}({\xi_2})$ then we define $F_k=\{\xi_1,\xi_2\}$. Using that $\m C$ is a finite set, following this steps we will obtain a finite subset $F_k$ satisfying \ref{cover} and \ref{disj}. Claim \ref{subset} is proved and the collection $F_k$, with $k\geq 0$, is defined.\end{proof}

\begin{claim}\label{collection} The collection of subsets $F_k\subset A$, with $k=0,1,\ldots$, satisfies:
\begin{enumerate}[(i)]
\item\label{finite} $F_k$ is finite and $F_k\subset D_k$;
\item\label{sum} $A\subset \cup_{j=1}^\infty \cup_{\xi\in F_j} S_{t\be R_k}(\xi)$;
\item\label{pairdisjoint} the colection  $\m B_{R_k}(\xi)$, with $\xi\in F_k$ and $k\geq 1$, are pairwise disjoint.
\end{enumerate}
\end{claim}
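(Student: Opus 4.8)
The plan is to verify the three items directly from the inductive construction in Claim 3.1, treating each in turn. Item (i) is immediate: by construction each $F_k$ is a finite subset of $D_k$ (this is the content of Claim 3.1(i), with $F_0=\emptyset\subset D_0$ vacuously), so there is nothing further to prove. Item (iii) is the pairwise-disjointness of the whole collection $\{\m B_{R_k}(\xi)\}_{\xi\in F_k,\,k\ge 1}$. For two balls centered at distinct points of the \emph{same} $F_k$ this is exactly Claim 3.1(ii). The remaining case is two balls $\m B_{R_j}(\xi)$ and $\m B_{R_k}(\xi')$ with $j<k$, $\xi\in F_j$, $\xi'\in F_k$. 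Here I would use the definition of $D_k$: since $\xi'\in D_k = \bar A_k - \bigcup_{i=1}^{k-1}\bigcup_{\eta\in F_i} S_{t\be R_i}(\eta)$, in particular $\xi'\notin S_{t\be R_j}(\xi)$, i.e. $d_{\bar M}(\xi',\xi) > t\be R_j$. Since $\be\ge 2/t$ we have $t\be R_j \ge 2 R_j \ge R_j + R_k$ (using $R_k = \be^{k-j}R_j \le \be R_j \le R_j$ because $\be<1$), so the two balls are disjoint by the triangle inequality. This is the step where the constraint $\be\in[2/t,1)$ and the geometric decay $R_k=\be^k\al$ are genuinely used, and it is the main (though still routine) point of the argument.

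For item (ii) I must show $A \subset \bigcup_{j\ge 1}\bigcup_{\xi\in F_j} S_{t\be R_j}(\xi)$. Fix $\xi\in A$. By Lemma 3.2 and the decomposition $A=\sqcup_{j=0}^\infty A_j$, there is a smallest index $j$ with $\xi\in A_j$; equivalently $\xi\in \bar A_j$ (note $A_j\subset\bar A_j$). If $\xi$ already lies in $S_{t\be R_i}(\eta)$ for some $i<j$ and some $\eta\in F_i$, we are done. Otherwise $\xi\notin\bigcup_{i=1}^{j-1}\bigcup_{\eta\in F_i}S_{t\be R_i}(\eta)$, so $\xi\in D_j$, and then Claim 3.1(i) gives $D_j\subset\bigcup_{\eta\in F_j}S_{t\be R_j}(\eta)$, so $\xi$ is covered at level $j$. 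Either way $\xi$ lies in the asserted union, which proves (ii).

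The only subtlety worth flagging is a bookkeeping point: one must be careful that the $\bar A_k$ appearing in the definition of $D_k$ is indeed (a suitable closure of) the set where the conclusion of Lemma 3.2 holds at scale $R_k$, so that the covering property of $D_k$ transfers back to points of $A_k$; this is precisely what makes the minimal-index choice in the proof of (ii) work. No hard analysis is involved here — the content of the claim is entirely the combinatorics of a Vitali-type selection, and once items (i)–(iii) are in place they feed directly into the volume comparison that will follow in the proof of Theorem 1.1.
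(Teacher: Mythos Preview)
Your proof is correct and follows essentially the same approach as the paper: item (i) is immediate from Claim~3.1, item (ii) is handled by locating each $\xi\in A$ in some $\bar A_j$ and then using the covering $D_j\subset\bigcup_{\eta\in F_j}S_{t\be R_j}(\eta)$, and item (iii) is split into the same-level case (Claim~3.1(ii)) and the cross-level case via $\xi'\notin S_{t\be R_j}(\xi)$ together with $t\be\ge 2$ and $R_k\le R_j$. The only cosmetic slip is the phrase ``smallest index $j$ with $\xi\in A_j$'': since $A=\sqcup_j A_j$ is a \emph{disjoint} union there is exactly one such $j$, but this does not affect the argument.
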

\begin{proof}  Item \ref{finite} it follows trivially from Claim \ref{subset}. Item \ref{sum} follows from the following facts: $D_k=\bar A_k-\cup_{j=1}^{k-1} \cup_{\xi\in F_j} S_{t\be R_j}(\xi)$, $D_k\subset \cup_{\xi\in F_k} S_{t\be R_k}(\xi)$ and $A\subset \cup_{k=0}^\infty \bar A_k$. To prove Item \ref{pairdisjoint}, take  $\xi\in F_j$ and $\xi'\in F_{k}$ with $j\leq k$. If $j=k$ then $\m B_{R_k}(\xi)\cap \m B_{R_k}(\xi')=\emptyset$, by Item \ref{disj} of Claim \ref{subset}. If $j\leq k-1$ then since $F_k\subset \bar A_k-\cup_{j=1}^{k-1} \cup_{\xi\in F_j} S_{t\be R_j}(\xi)$, we obtain that $\xi'\not\in S_\xi(t\be R_j)$. This implies that  $\m B_{R_j}(\xi)\cap \m B_{R_k}(\xi')=\emptyset$, since $t\be\geq 2$ and $0<R_k\leq R_j$. Claim \ref{collection} is proved. \end{proof}

For each $\xi\in F_k$, it holds that $\bar\phi_\xi(tR)\le\frac{2\al}{\ka} t^{m-1}\bar\psi_\xi(R)$, for some $R\in (\be R_k,R_k]$. This implies that
\begin{equation*}
\bar\phi_\xi(t\be R_k)\leq \bar\phi_\xi(tR)\leq \frac{2\al}{\ka}t^{m-1}\bar\psi_\xi(R) \leq \frac{2\al}{\ka}t^{m-1}\bar\psi_\xi(R_k).
\end{equation*}
Thus, since $\vp(\xi)\geq 1$, for all $\xi\in A$, it follows by Claim \ref{collection} the following. 
\begin{eqnarray}\label{volA}
\vol_f(A)&\leq& \int_A \vp d\mu \leq \sum_{k=1}^\infty\sum_{\xi\in F_k} \bar\phi_\xi(t\be R_k) \leq 
\sum_{k=1}^\infty\sum_{\xi\in F_k} \frac{2\al}{\ka}t^{m-1}\bar\psi_\xi(R_k)\nonumber \\
&=& \frac{2\al}{\ka}t^{m-1}\int_{\cup_{k=1}^\infty \cup_{\xi\in F_k}\m S_{R_k}(\xi)} |\na \vp+\vp(H_f-\bar\na f)| d\mu \nonumber
\\&\leq& \frac{2\al}{\ka}t^{m-1}\int_M |\na \vp+\vp(H_f-\bar\na f)|d\mu.
\end{eqnarray}

Now, for each $s>0$, we define the set
$A^s=\left\{\xi\in M \bigm| \vp(\xi)\geq s\right\}$,
and let $\bar J=\bar J(\ka,\vp)$ be given by 
$$\bar J=\left(\frac{\om_m^{-1}e^{f^*}}{1-\ka}\vol_f\big(\spp(\vp)\big)\right)^{\frac{1}{m}}.$$ 
Assume that $0<\bar J<s_0$, for some  $\ka\in (0,1)$ and let $\bar \al\in (0,r_0)$ be given by $h(\bar \al)=\bar J$. Assume further that $t\bar \al<R_0$, for some $t>2$.

Fix $\ep>0$ and let $\delta=\delta(\cdot\,,\ep):\real\to [0,1]$ be a non-decreasing $C^1$ function  satisfying: 
\begin{equation}\label{mu}\left\{
\begin{array}{l}
0<\delta(t)<1, \mbox{ for all } t\in (-\ep,0);\\ 
\delta(t)=0, \mbox{ for all } t \in (-\infty,-\ep];\\ 
\delta(t)=1, \mbox{ for all } t\in [0,\infty).    
\end{array}\right.
\end{equation}
For all $s>\ep$ we consider the function $\eta=\eta(\cdot\,,\ep,s):M\to \real$ given by 
\begin{equation*} 
\eta(\xi)=\delta\left(\vp(\xi)-s\right).
\end{equation*}
It is easy to see that
\begin{claim}\label{items-f} The following statements hold:
\begin{enumerate}[(i)]
\item\label{C1} $\eta \in C^1(M)$;
\item\label{fmenor1} $0\leq \eta(\xi)\leq 1$, for all $\xi\in M$; 
\item\label{sppf} $\spp\eta\subset \spp\vp$;
\item\label{As} $\eta(\xi)=1$ if, and only if, $\vp(\xi)\geq s$.
\end{enumerate}
\end{claim}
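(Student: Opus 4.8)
The plan is to verify the four items directly from the definition $\eta(\xi)=\de(\vp(\xi)-s)$ together with the structural properties (\ref{mu}) of $\de$ and the standing hypotheses that $\vp$ is a nonnegative $C^1$ function and that $s>\ep$. None of the items needs more than elementary reasoning, so the "proof" is really a sequence of short verifications.

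For item \ref{C1}, I would note that $\xi\mapsto \vp(\xi)-s$ is $C^1$ on $M$ since $\vp$ is, and $\de\in C^1(\real)$, so $\eta$ is $C^1$ as a composition of $C^1$ maps. For item \ref{fmenor1}, observe that (\ref{mu}) pins down $\de$ on all of $\real$ and forces it to take values in $[0,1]$: it equals $0$ on $(-\infty,-\ep]$, lies in $(0,1)$ on $(-\ep,0)$, and equals $1$ on $[0,\infty)$; hence $0\le \eta\le 1$ pointwise on $M$.

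For item \ref{sppf}, since $\spp\vp$ is closed it suffices to check the inclusion $\{\eta\neq 0\}\subset \spp\vp$ and then pass to closures. If $\eta(\xi)=\de(\vp(\xi)-s)\neq 0$, then by (\ref{mu}) we must have $\vp(\xi)-s>-\ep$, that is $\vp(\xi)>s-\ep>0$ (this is exactly where the hypothesis $s>\ep$ enters), so $\xi\in\{\vp\neq 0\}\subset\spp\vp$; taking closures gives $\spp\eta\subset\spp\vp$. For item \ref{As}, the first and third lines of (\ref{mu}) together say precisely that $\de(t)=1$ if and only if $t\ge 0$, since $\de$ is strictly less than $1$ on $(-\ep,0)$ and equal to $0$ on $(-\infty,-\ep]$; applying this with $t=\vp(\xi)-s$ yields $\eta(\xi)=1$ if and only if $\vp(\xi)\ge s$.

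There is no real obstacle here; the only point requiring a moment's care is item \ref{sppf}, where the strict inequality $s>\ep$ is what guarantees that a point at which $\eta$ is nonzero already lies in the support of $\vp$. I would present all four verifications in a single short paragraph.
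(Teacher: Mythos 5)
Your verification is correct and is exactly the routine unwinding of the definition of $\eta$ and conditions (\ref{mu}) that the paper has in mind when it dismisses the claim with ``It is easy to see that.'' You also correctly flag the one non-trivial point—that the hypothesis $s>\ep$ is what makes item~\ref{sppf} work by forcing $\vp(\xi)>s-\ep>0$ whenever $\eta(\xi)\neq 0$—so there is nothing to add.
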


In particular, if $\spp\eta\neq \emptyset$ then  
$0<J(\ka,\eta)\leq \bar J(\ka,\vp)<r_0$,
hence Lemmas \ref{l4.1} and \ref{l4.2} applies (with $J=J(\ka,\eta)$ and $\al=\al(\ka,\eta)$). Thus, by  (\ref{volA}) and Claim \ref{items-f} , we obtain the following. 
\begin{equation}\label{As-f}
\vol(A^s)=\vol\left(\{\xi \bigm| \eta(\xi)=1\}\right) \leq \frac{2\al}{\ka}t^{m-1}\int_M |\na \vp+\vp(H_f-\bar\na f)|d\mu. 
\end{equation}

We recall that the function $h$ satisfies $h(0)=0$, $h'(0)=1$ and $h:[0,r_0)\to [0,s_0)$ is increasing and concave. Thus the inverse function $h^{-1}:[0,s_0)\to [0,r_0)$ is increasing, convex and satisfies $h^{-1}(0)=0$ and $\big(h^{-1}\big)'(0)=1$ hence,  $h^{-1}(\tau)\leq s_0 \tau$, for all $\tau\in (0,s_0)$, which implies  
\begin{equation}\label{convex}
\al=h^{-1}(J)\leq \frac{r_0}{s_0} J = C_1\left(\int_M \eta d\mu\right)^{\frac{1}{m}},
\end{equation}
where $C_1=\frac{r_0}{s_0}\left(\frac{\om_m^{-1}}{1-\ka}\right)^\frac{1}{m} e^{\frac{f^*}{m}}$. 

Note that  $s^\frac{m}{m-1}\delta(\vp -s)\leq \left(\vp +\ep\right)^{\frac{m}{m-1}}$, 
for all $s>\ep$. Thus, by (\ref{As-f}) and
(\ref{convex}), we obtain
\begin{eqnarray}\label{integrar}
s^{\frac{1}{m-1}}\vol(A^s)&\leq&\frac{2 C_1}{\ka}t^{m-1}\left(s^{\frac{m}{m-1}}
\int_M \eta d\mu\right)^{\frac{1}{m}}\int_M |\na\eta+\eta(H_f-\bar\na f)|d\mu\nonumber\\
&=&C_2 \left(\int_{M} s^{\frac{m}{m-1}}\delta\big(\vp-s\big)d\mu\right)^{\frac{1}{m}}
\int_M |\na\eta+\eta(H_f-\bar\na f)|d\mu\nonumber\\ 
&\leq&C_2\left(\int_{M} \left(\vp+\ep\right)^{\frac{m}{m-1}}d\mu\right)^{\frac{1}{m}}\int_M |\na\eta+\eta(H_f-\bar\na f)|d\mu,
\end{eqnarray}
for all $s>\ep$, where $C_2=\frac{2 C_1}{\ka}t^{m-1}$. Furthermore,
\begin{eqnarray}\label{vol(A_s)cot}
\int_0^\infty s^\frac{1}{m-1}\vol_f(A_s) ds 
&=& \int_0^\infty \int_{\left\{\xi\in M \bigm| \vp(\xi)\geq s \right\}} s^{\frac{1}{m-1}} d\mu ds\\ 
&=&\int_{\left\{(\xi,s)\in M\times \real \bigm| 0<s\leq \vp(\xi)\right\}}s^\frac{1}{m-1}d\mu ds\nonumber\\
&=&\int_M\int_0^{\vp(\xi)} s^{\frac{1}{m-1}} ds d\mu\nonumber\\ 
&=& \frac{m-1}{m}\int_M \vp^{\frac{m}{m-1}}d\mu.\nonumber
\end{eqnarray}
Using (\ref{integrar}) and (\ref{vol(A_s)cot}), we obtain that
\begin{eqnarray}\label{quasesobolev}
\int_M \vp^{\frac{m}{m-1}}d\mu 
&\leq& \frac{m C_2}{m-1}\lim_{{\ep\to 0}}\,\left(\Big(\int_{M} \Big(\vp+\ep\Big)^{\frac{m}{m-1}}d\mu\Big)^{\frac{1}{m}}\times\right.\nonumber
\\&&\left.\times\int_\ep^\infty\int_M |\na\eta+\eta(H_f-\bar\na f)|d\mu ds\right).
\end{eqnarray}
Since $0\leq \delta(t)\leq 1$,
for all $t$, and $\delta(t-s)=0$, for all $s\geq t+\ep$,  we obtain  
\begin{eqnarray}\label{mu(h-s)H}
\int_\ep^\infty \int_M  \eta |H_f-\bar\na f| \, d\mu ds
&=& \int_M \int_\ep^\infty  \delta\left(\vp -s\right)|H_f-\bar\na f|\,dsd\mu\nonumber \\ 
&=& \int_M\int_\ep^{\vp+\ep} \delta\left(\vp -s\right)|H_f-\bar\na f| \,ds d\mu\nonumber \\ 
&\leq& \int_M \vp|H_f-\bar\na f| \, d\mu.
\end{eqnarray}
Furthermore, since $|\na\eta|
=\de'\left(\vp-s\right)|\na\vp| = -\frac{d}{ds} \de(\vp-s)|\na\vp|$, we obtain from the fundamental theorem of calculus the following.
\begin{eqnarray}\label{TFC}
\int_\ep^\infty\int_M |\na\eta|\, d\mu ds &=& \int_M\int_\ep^{\vp(\xi)+\ep} |\na\eta|\, dsd\mu \nonumber\\ \nonumber \\ &\leq& \int_M\int_\ep^{\vp(\xi)+\ep}\delta'\left(\vp(\xi)-s\right)\left|\na \vp\right|\,dsd\mu \nonumber\\
&=& \int_M \de(\vp(\xi)-\ep)|\na\vp|d\mu \nonumber\\
&\le& \int_M |\na\vp|d\mu.
\end{eqnarray}
Therefore, we obtain 
\begin{equation}\label{sob-des-1}
\left(\int_M \vp^{\frac{m}{m-1}}d\mu \right)^{\frac{m-1}{m}} \le \frac{mC_2}{m-1} \int_M \left(|\na\vp|+\vp(|H_f-\bar\na f|)\right)d\mu.
\end{equation}

To finish the proof of Theorem \ref{desig-Sobolev}, we  apply (\ref{sob-des-1}) to the function $\vp^\ga$, where $\ga>1$ is a constant to be defined. By H\"older inequality, we obtain
\begin{eqnarray}\label{holder-ineq}
\left(\int_M \vp^{\frac{\ga m}{m-1}}d\mu \right)^{\frac{m-1}{m}} &\le& C_3 \int_M \vp^{\ga-1}\left(|\na\vp|+\vp(|H_f-\bar\na f|)\right)d\mu \nonumber\\ &\le& C_3 \left(\int_M \vp^{q(\ga-1)}\right)^{\frac{1}{q}}\left(\int_M\left(|\na\vp|+\vp(|H_f-\bar\na f|)\right)^p d\mu\right)^{\frac{1}{p}},
\end{eqnarray}
where $C_3=\frac{m C_2}{m-1}$ and $q=\frac{p}{p-1}$. Take $1<p<m$ and let $\ga=\frac{p(m-1)}{m-p}$. We have that $\frac{\ga m}{m-1} = q(\ga-1)=\frac{mp}{m-1}$ and $\frac{m-1}{m}-\frac{1}{q}=\frac{m-p}{mp}$. Thus, by (\ref{holder-ineq}), we obtain
\begin{equation}
\left(\int_M \vp^{\frac{mp}{m-p}}d\mu\right)^{\frac{m-p}{m}} \le C_3 \int_M\left(|\na\vp|+\vp(|H_f-\bar\na f|)\right)^p d\mu.
\end{equation}
We obtain the constant $S$ as in (\ref{sobolev-constant}) by taking $t\to 2$ in $$\lim_{{t\to 2}\atop{t>2}} C_3= \frac{2mr_0}{ks_0(m-1)}2^{m-1}\left( \frac{\om_m^{-1}}{1-\ka}\right)^\frac{1}{m}e^{\frac{f^*}{m}}=S e^{\frac{f^*}{m}}.$$ Theorem \ref{desig-Sobolev} is proved.

\section{Proof of Theorem \ref{des_isoperimetrica}}

\begin{proof} Consider the neighborhood $V=\{x\mid d_M(x,\p M)<\ep\}$. Take $A>1$ and let  $\vp=\vp(\cdot\,,\ep):M\to \real$ be a nonnegative $C^1$ function satisfying:
\begin{enumerate}[(i)]
\item $\vp(x)=1, \mbox{ if } d_M(x,\p M)\geq \ep$;
\item $0<\vp(x)<1 \mbox{ and } |\na \vp|\leq A\ep^{-1}, \mbox{ if } 0<d_M(x,\p M)<\ep$;
\item $\vp|_{\p M}=0$.
\end{enumerate}
By Theorem \ref{desig-Sobolev} we obtain 
\begin{equation*}
\dfrac{1}{S}\left(\int_{\{ \xi \mid \rho(\xi)\geq \ep\}} \,d\mu\right)^{\frac{m-1}{m}} \leq  \int_M |\na \vp| d\mu + \int_M |H_f-\bar\na f|d\mu,
\end{equation*}
provided that condition (\ref{cond-isoper-inq}) holds.
Using that  $|\na \rho|=1$, everywhere in $V$, it follows from the coarea formula that 
\begin{equation*}
\int_M |\na \vp|d\mu = \int_M |\na \vp| e^{-f} dM \leq \frac{A}{\ep} \int_0^\ep\int_{\{\xi \mid \rho(\xi)=\tau\}} e^{-f} d\m H^{m-1}
\end{equation*}
Since $\p M=\{\xi\mid d_M(\xi,\p M)=0\}$, by taking $\ep\to 0$, we obtain that 
\begin{equation*}
\int_M |\na \vp| d\mu \leq A \int_{\p M} \, e^{-f} d\m H^{m-1} = \vol_f(\p M).
\end{equation*}
Therefore, it holds
\begin{equation*}
\dfrac{1}{S}\vol_f(M)^{\frac{m-1}{m}} \leq \vol_f(\p M) + \int_M |H_f-\bar\na f|d\mu
\end{equation*}
Theorem \ref{des_isoperimetrica} is proved.
\end{proof}

\section{Proof of Theorem \ref{cmv-proc}.}

Let $\m K\in C^0(\real)$ be a nonnegative even function such that the radial curvatures of $\bar M$ satisfy $(\bar K_\rad)_{\xi}\leq \m K(r_\xi)$, for all $\xi\in M$. Let $h:(0,r_0)\to (0,s_0)$ be an increasing solution of (\ref{cauchy}) with $(0,s_0)=h(0,r_0)$.
Assume by contradiction that an end $E$ of $M$ has finite $f$-volume. Let $B=B_{\la_0}(\xi)$ be a geodesic ball of $M$ of radius $\la_0$ and center $\xi$. Take $\la_0$ sufficiently large so that $\p E\subset B$ and $\vol_f(E-B)<\La$, where $0<\La<1$ is a small constant satisfying 
\begin{equation}\label{isop-guarantee}
\begin{array}{l}
\bar J_\La=\left(\dfrac{\om_m^{-1}e^{f*}}{1-\ka}\La\right)^{\frac{1}{m}}\le s_0;\\
h^{-1}(\bar J_\La)\le 2 \inj_M,
\end{array}
\end{equation}
for some $\ka\in (0,1)$. Moreover, we take $\la_0$ sufficiently large satisfying further  
\begin{equation}\label{lp-cond}
\begin{array}{l}
\|H_f-\bar\na f\|_{L^p_\mu(E-B)}<\m C, \mbox{ if } m\le p <\infty;\\
\|H_f-\bar\na f\|_{L^\infty(E)} \vol_f(E-B)^{\frac{1}{m}} < \m C, \mbox{ if } p=\infty,
\end{array}
\end{equation}
where  $2\m C=(S e^{\frac{f^*}{m}})^{-1}$.

Now take $\la_1>\la_0$ sufficiently large so that $d_M(\p E,x)>2\la_0$, for all $x\in E-B_{\la_1}$. For all $q\in E-B_{2\la_1}$ we obtain that the ball $B_{\la_1}(q)\subset E-B$. In particular, by (\ref{isop-guarantee}), Theorem \ref{des_isoperimetrica} applies for $B_r(q)$, for all $0<r<\la_1$.  By H\"older inequality, we obtain that
\begin{equation}\label{cond-holder}
\begin{array}{l}
\int_{B_r(q)} |H_f-\bar\na f| d\mu \le \|H_f - \bar\na f\|_{L^p_\mu(E-B)}\vol_f(B_r(q))^{\frac{p-1}{p}}, \mbox{ if } m\le p<\infty;\\
\int_{B_r(q)} |H_f - \bar\na f| d\mu \le \|H_f - \bar\na f\|_{L^\infty_\mu(E)}\vol_f(E-B)^{\frac{1}{m}}\vol_f(B_r(q))^{\frac{m-1}{m}}.
\end{array}
\end{equation}
Furthermore, if $m\le p<\infty$ then, since $\vol_f(B_r(q))\le \vol_f(E-B)<1$ and $\frac{(p-1)}{p}\ge \frac{(m-1)}{m}$,  it holds that   $\vol_f(B_r(q))^{\frac{p-1}{p}}\ge \vol_f(B_r(q))^{\frac{m-1}{m}}$. Thus, by Theorem \ref{des_isoperimetrica}, and using  (\ref{lp-cond}) and (\ref{cond-holder}), we obtain 
$\m C\,\vol_f(B_r(q))^{\frac{m-1}{m}} \le \vol_f(\p B_r(q)).$
Thus, by using the coarea formula, 
$$\frac{d}{dr}\vol_f(B_r(q))^{\frac{1}{m}}=m^{-1}\vol_f(B_r(q))^{1-\frac{1}{m}}\vol_f(\p B_r(q))\ge \m C,$$
for all $0<r<\la_1$, hence $\vol_f(B_{\la_1}(q))\ge \m C \la_1$. 

Since $M$ is complete and $E$ is an unbounded connected component of $M$ we can take $q_k\in E - (B_{2k\la_1}-B_{(2k-1)\la_1})$, for all $k=1,2,\ldots$. Note also that $B_{\la_1}(q_k)\subset E-B_{2\la_1}$ and $B_{\la_1}(q_k)\cap B_{\la_1}(q_l)=\emptyset$, if $k\neq l$. Thus we obtain
$\vol_f(E)\ge \sum_{k=1}^\infty \vol_f(B_{\la_1}(q_k)) \ge \sum_{k=1}^\infty \m C \la_1=\infty$, which is a contradiction. Theorem \ref{cmv-proc} is proved.


\begin{thebibliography}{99}

\bibitem[B]{b} Bayle, V. {\it Propri\'et\'ees de concavit\'e du profil isop\'erim\'etrique et applications},
Th\'ese de Doctorat, 2003.

\bibitem[Ca]{c} Castillon, P., {\it Submanifolds, isoperimetric inequalities and optimal transportation},  J. Funct. Anal. 259 (2010), no. 1, 79 -- 103.

\bibitem[CMZ]{cmz} Cheng, X., Mejia, T., Zhou, D. {\it Stability and Compactness for Complete $f$-minimal Surfaces}, arXiv: 12108076v1[math.DG], 2012.

\bibitem[CMV]{cmv} Cavalcante, M.P., Mirandola, H. and Vitorio, F., {\it The non-parabolicity of infinite volume ends}, Preprint 2012, to appear in Proc. Amer. Math. Soc.

\bibitem[CL]{cl} Cheung, L.-F. and Leung, P.-F., 
{\it The mean curvature and volume growth of complete noncompact submanifolds}, Differential Geom. Appl. 8 (1998), no. 3, 251 -- 256.

\bibitem[Ch]{ch} Choe, J., {\it Isoperimetric inequalities of minimal submanifolds}, Global Theory of Minimal Surfaces, Clay Math. Proc., vol. 2, Amer. Math. Soc., Providence, RI, 2005, pp. 325 -- 369. 

\bibitem[CWX]{cwx} do Carmo, M. P., Wang, Q. and Xia, C., 
{\it Complete submanifolds with bounded mean curvature in a Hadamard manifold}, J. Geom. Phys. 60 (2010), no. 1, 142 -- 154.

\bibitem[Dr]{dr} Drugan, G. , {\it An immersed $S^2$ self-shrinker}, arXiv: 1304.0032, 2013. To appear in Trans. of Amer. Math. Soc.

\bibitem[E]{e} Espinar, J.M., {\it Manifolds with Density, applications and Gradient Schr$\ddot{o}$dinger Operators}, arXiv:1209.6162v6 [math.DG],  2012.

\bibitem[FR]{fr} Frensel, K., 
{\it Stable complete surfaces with constant mean curvature}, 
Bul. of the Braz. Math. Soc. 27 (1996), 129 -- 144.

\bibitem[FX]{fx} Fu, H-P. and Xu, H-W., 
{\it Total curvature and $L^2$ harmonic $1$-forms on complete submanifolds in space forms}, 
Geom. Dedicata  144  (2010), 129 -- 140.


\bibitem [G]{g} Gromov, M. {\it Isoperimetric of waists and concentration of maps}, Geom. Funct. Anal., 13 (2003) no. 1, 178 -- 215.

\bibitem[HS]{hs} Hoffman, D., Spruck, J., {\it Sobolev and isoperimetric inequalities for Riemannian submanifolds}, Comm. Pure. Appl. Math. 27 (1974), 715 -- 727.

\bibitem[IR]{ir} Impera, D. and Rimoldi, M., {\it Stability properties and topology at infinity of $f$-minimal hypersurfaces}, arXiv:1302.6160v1 [math.DG], 2013.  

\bibitem[MS]{ms} Michael, J. H. and Simon, L. M., {\it Sobolev and mean-value inequalities on generalized submanifolds of $\real^n$}, Comm. Pure Appl. Math. 26 (1973), 361Ð379.

\bibitem[M]{m} Morgan, F. {\it Manifolds with density}, Notices Amer. Math. Soc. 52, (2005), no. 8, 853 -- 858.


\bibitem[MW1]{mw1} Monteanu, O., Wang, J. {\it Analysis of weighted Laplacian and applications to Ricci solitons}, arXiv:1112.3027v1, 2011. To appear in Comm. Anal. Geom. 

\bibitem[MW2]{mw2} Monteanu, O., Wang, J. {\it Geometry of Manifolds with Densities},  arXiv:1211.3996v1 [math.DG], 2012.

\bibitem[RSP]{rsp} Pigola, S., Rigoli, M. and Setti, A. G.,
{\it Vanishing and finiteness results in geometric analysis}, Progress in Mathematics, vol. 266, Birkh\"auser Verlag, Basel, 2008, A generalization of the Bochner technique.

\bibitem[WW]{ww} Wei, G., Wylie, W. {\it Comparison geometry for the Bakry-Emery Ricci tensor}, J. Differential
Geom., 83 , (2009) no. 2, 377 -- 405.

\end{thebibliography}
\end{document}